\newtheorem{Thm}{Theorem} %[section]
\newaliascnt{Lem}{Thm}
\newtheorem{Lem}[Lem]{Lemma}
\newaliascnt{Prop}{Thm}
\newtheorem{Prop}[Prop]{Proposition}
\newaliascnt{Cor}{Thm}
\newaliascnt{Con}{Thm}
\newtheorem{Con}[Con]{Conjecture}
\numberwithin{equation}{section}
\renewcommand{\phi}{\varphi}
\newcommand{\C}{\mathrm{C}}
\newcommand{\N}{\mathrm{N}}
\newcommand{\Z}{\mathrm{Z}}
\newcommand{\ZZ}{\mathbb{Z}}
\newcommand{\CC}{\mathbb{C}}
\newcommand{\QQ}{\mathbb{Q}}
\newcommand{\cohom}{\operatorname{H}}
\newcommand{\Aut}{\mathrm{Aut}}
\newcommand{\pcore}{\mathrm{O}}
\newcommand{\GL}{\operatorname{GL}}
\newcommand{\Irr}{\mathrm{Irr}}
\newcommand{\IBr}{\mathrm{IBr}}
\newcommand{\J}{\mathrm{J}}
\newcommand{\Bl}{\operatorname{Bl}}
\newcommand{\Cl}{\mathrm{Cl}}
\newcommand{\Gal}{\operatorname{Gal}}
\newcommand{\Ker}{\operatorname{Ker}}
\newcommand{\sgn}{\operatorname{sgn}}
\newcommand{\diag}{\operatorname{diag}}
\mathchardef\ordinarycolon\mathcode`\:  %defines a nice ":=" 
\title{Character tables and defect groups}
\author{Benjamin Sambale\footnote{Institut für Algebra, Zahlentheorie und Diskrete Mathematik, Leibniz Universität Hannover, Welfengarten 1, 30167~Hannover, Germany,
\href{mailto:sambale@math.uni-hannover.de}{sambale@math.uni-hannover.de}}}
\date{\today}
\begin{document}
\frenchspacing
\maketitle
\begin{abstract}\noindent
Let $B$ be a block of a finite group $G$ with defect group $D$. We prove that the exponent of the center of $D$ is determined by the character table of $G$. In particular, we show that $D$ is cyclic if and only if $B$ contains a “large” family of irreducible $p$-conjugate characters. More generally, for abelian $D$ we obtain an explicit formula for the exponent of $D$ in terms of character values. In small cases even the isomorphism type of $D$ is determined in this situation.
Moreover, it can read off from the character table whether $|D/D'|=4$ where $D'$ denotes the commutator subgroup of $D$. We also propose a new characterization of nilpotent blocks in terms of the character table.
\end{abstract}

\textbf{Keywords:} character table, defect groups\\
\textbf{AMS classification:} 20C15, 20C20

\section{Introduction}

A major problem in character theory is to decide which properties of a finite group $G$ can be read off from the complex character table $X(G)$ of $G$. In this note we focus on properties of $p$-blocks of $G$ and their defect groups. For motivational purpose we review some results on the principal $p$-block of $G$ (or any block of maximal defect). It is known that $X(G)$ determines the following properties of a Sylow $p$-subgroup $P$ of $G$:
\begin{enumerate}[(1)]
\item $|P|$ (only the first column of $X(G)$ is needed).

\item whether $P$ is abelian. For $p=2$, this is an elementary result of Camina--Herzog~\cite{CaminaHerzog} (cf. \cite{NavarroTiep}), but it requires the classification of finite simple groups (CFSG for short) if $p$ is odd (see \cite{KimmerleSandling,NST}). If $P$ is abelian, also the isomorphism type of $P$ can be read off from $X(G)$, albeit there is no easy way of doing this (see \cite{KimmerleSandling}). 

\item\label{expP} the exponent of the center $\Z(P)$ (see \cite[Corollary~3.12]{Navarro2}). 

\item whether $P\unlhd G$ (in fact, all normal subgroup orders).

\item whether $P$ has a normal $p$-complement, i.\,e. whether the principal block is nilpotent (only the first column of $X(G)$ is needed, see \cite[Theorem~7.4]{Navarro2}). 

\item\label{B01} whether $\N_G(P)=P\C_G(P)$, i.\,e. whether the principal block has inertial index $1$. This was done by Navarro--Tiep--Vallejo~\cite[Theorem~D]{NTV} for $p>2$ and by Schaeffer-Fry--Taylor~\cite[Theorem~1.7]{SFT} if $p=2$. Both cases rely on the CFSG.

\item\label{NPP} whether $\N_G(P)=P$ (see Navarro--Tiep--Turull~\cite{NTT} for $p>2$ and Schaeffer-Fry~\cite{SF} for $p=2$). Again the CFSG is required.

\item whether $P$ is a TI set. The case $p=2$ appeared in Chillag--Herzog~\cite[Corollary~7]{CH} and the author has verified the result for $p>2$ via the CFSG.

\item the exponent of the abelianization $P/P'$ if $p=2$. This is a special case of a conjecture by Navarro--Tiep~\cite{NTPP'} proved by Malle~\cite{Mallep2} using the CFSG.

\item\label{PP'} whether $|P/P'|=4$ (see \cite{NaSaTi}).
\end{enumerate}

The results on the exponents of $\Z(P)$ and $P/P'$ are of interest, because $X(G)$ does not determine $\exp(P)$ (consider the non-abelian groups of order $p^3$ where $p>2$).

Now let $B$ be an arbitrary $p$-block of $G$ with defect group $D$. 
The distribution of irreducible characters into $p$-blocks is given by $X(G)$ (see \cite[Theorem~3.19]{Navarro}) and the order of $D$ can be computed by the formula
\[|D|=\max\Bigl\{\frac{|G|_p}{\chi(1)_p}:\chi\in\Irr(B)\Bigr\}\]
(here and in the following, $n_p$ and $n_{p'}$ denote the $p$-part and $p'$-part of an integer $n$).
An element $g\in G$ is conjugate to an element of $D$ if and only if $\chi(g)\ne 0$ for some $\chi\in\Irr(B)$ (see \cite[Lemma~22]{HHS}).
In particular, we can decide if $D\unlhd G$. 
Whether or not we can determine if $D$ is abelian would follow from the still unproven Height Zero Conjecture of Richard Brauer.
Recently, Gabriel Navarro has asked me if $X(G)$ determines if $D$ is cyclic. As far as we know this has not yet been observed in the literature (an explicit conjecture for $p\le 3$ appeared in \cite{RSV}). We give an affirmative answer in terms of Galois theory. Recall that $\chi,\psi\in\Irr(G)$ are called \emph{$p$-conjugate} if there exists a Galois automorphism $\gamma$ of $\overline{\QQ}$ such that $\chi^\gamma=\psi$ and $\gamma(\zeta)=\zeta$ for all $p'$-roots of unity $\zeta$ (see next section).

\begin{Thm}\label{cyclic}
Let $B$ be a $p$-block of a finite group $G$ with defect $d>0$. 
Then $B$ has cyclic defect groups if and only if $\Irr(B)$ contains a family of $p$-conjugate characters of size divisible by $p^{d-1}$.
\end{Thm}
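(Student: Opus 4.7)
The plan is to analyse both implications through the $p$-conjugacy Galois action on $\Irr(B)$. Setting $p^a:=|G|_p$, the relevant group is $\Gamma:=\Gal(\QQ(\zeta_{|G|})/\QQ(\zeta_{|G|_{p'}}))$, which is cyclic of order $\phi(p^a)=p^{a-1}(p-1)$; the families of $p$-conjugate characters in $\Irr(B)$ are precisely the $\Gamma$-orbits. Since $\Gamma$ preserves blocks, the question is for which $B$ some $\Gamma$-orbit on $\Irr(B)$ has size divisible by $p^{d-1}$.

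For the ``if'' direction I argue the contrapositive using the generalized decomposition formula. For $\chi\in\Irr(B)$ and any $g=uv\in G$ written in Jordan decomposition with $u$ the $p$-part, Brauer's theorem gives
\[\chi(uv)=\sum_{b^G=B}\sum_{\phi\in\IBr(b)}d^u_{\chi\phi}\,\phi(v),\]
where $b$ ranges over blocks of $\C_G(u)$ inducing to $B$. By Brauer's second main theorem such $b$ exist only when $u$ is $G$-conjugate into $D$, so we may assume $|u|$ divides $\exp(D)$. Since $d^u_{\chi\phi}\in\ZZ[\zeta_{\exp(D)}]$ and $\phi(v)\in\QQ(\zeta_{|G|_{p'}})$, every character value $\chi(g)$ lies in $\QQ(\zeta_{\exp(D)\cdot|G|_{p'}})$. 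Consequently $\Gamma$ acts on $\chi$ through its surjection onto $(\ZZ/\exp(D)\ZZ)^*$, so every $\Gamma$-orbit on $\Irr(B)$ has $p$-part dividing $\exp(D)/p$. If $D$ is not cyclic, then $\exp(D)\le|D|/p=p^{d-1}$, so orbit $p$-parts are bounded by $p^{d-2}$ and cannot be divisible by $p^{d-1}$.

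For the ``only if'' direction, I invoke the Brauer--Dade theory of blocks with cyclic defect group. Such $B$ has $e+(p^d-1)/e$ irreducible characters, where the inertial index $e$ divides $p-1$; the $(p^d-1)/e$ exceptional characters are parameterised by the orbits of the inertial quotient $E$ on $\Irr(D)\setminus\{1\}$, and $\Gamma$ permutes them compatibly with the natural action on $\Irr(D)/E$. Under the surjection $\Gamma\twoheadrightarrow(\ZZ/p^d\ZZ)^*\cong\Aut(D)$, the subgroup $E$ is a $p'$-subgroup that acts freely on the faithful characters of $D$. Hence the exceptional characters indexed by faithful characters of $D$ form a single $\Gamma$-orbit of size $\phi(p^d)/e=p^{d-1}(p-1)/e$, which is divisible by $p^{d-1}$ since $e\mid p-1$.

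The main obstacle I expect is in the ``only if'' direction: while the parameterisation of exceptional characters is classical, a clean verification that $\Gamma$ acts on them exactly as on $\Irr(D)/E$ requires comparing the explicit values of the $\chi_\lambda$ on $p$-singular classes (for instance via Dade's formulas) with the Galois action on $p$-power roots of unity. An alternative is to compute $\Gamma$-stabilisers of individual exceptional characters directly from the generalized decomposition formula, using the explicit shape of $d^u_{\chi\phi}$ in the cyclic-defect case; this has the advantage of parallelling the converse argument and avoiding any appeal to Dade's full classification.
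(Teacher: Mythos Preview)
Your contrapositive for the non-cyclic case matches the paper's argument exactly: both observe that the generalized decomposition numbers lie in $\QQ_{\exp(D)}\subseteq\QQ_{p^{d-1}}$, so $\mathcal{G}$-orbit lengths divide $\phi(p^{d-1})$ and hence have $p$-part at most $p^{d-2}$.

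For the cyclic case the approaches diverge. You invoke the Brauer--Dade parameterisation of exceptional characters by $(\Irr(D)\setminus\{1\})/E$ and exhibit the orbit corresponding to the faithful characters of $D$; this works uniformly for all primes but, as you rightly flag, requires verifying that the Galois action on exceptional characters coincides with the natural action on $\Irr(D)/E$. That fact is standard (it follows from Dade's explicit formulas for the generalized decomposition numbers of exceptional characters), so your argument is complete once it is cited. The paper instead argues via Brauer's permutation lemma: for $p>2$ the group $\mathcal{G}$ is cyclic, so the rows and columns of the generalized decomposition matrix $Q$ form isomorphic $\mathcal{G}$-sets; the paper then computes directly that the \emph{column} orbit of a generator $x$ of $D$ has size $|\Aut(D):N|\equiv 0\pmod{p^{d-1}}$, using only that the Brauer correspondents $b_u$ are nilpotent (so $l(b_u)=1$) and that fusion in $D$ is controlled by the inertial quotient $N$, and transfers this to the rows. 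Since the permutation lemma requires $\mathcal{G}$ cyclic, the paper treats $p=2$ separately via Broué--Puig's description of nilpotent blocks. Your route is more uniform across primes but leans on the full cyclic-defect machinery; the paper's route is more elementary in that it needs only $l(b_u)=1$ and a Burnside-type fusion argument, at the cost of the case split.
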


Next we show that \eqref{expP} above generalizes to blocks. 
Although this implies \autoref{cyclic}, there is apparently no simple formula to compute $\exp(\Z(D))$ from $X(G)$.

\begin{Thm}\label{expZ}
Let $B$ be a block of a finite group $G$ with defect group $D$. Then the exponent of the center $\Z(D)$ is determined by the character table of $G$.
\end{Thm}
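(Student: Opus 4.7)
The plan is to identify $\exp(\Z(D))$ as the largest order $|z|$ of a $p$-element $z\in G$ that is $G$-conjugate into $\Z(D')$ for some defect group $D'$ of $B$, and then to verify that, for each class $[z]$ of $p$-elements, this property can be detected from $X(G)$.

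I first record what $X(G)$ supplies: all centralizer orders $|\C_G(g)|$; the class-multiplication constants, which determine the power map on classes, and hence element orders and the set of $p$-element classes; for each commuting pair of classes $([g],[h])$, the class of $gh$; the block distribution $\Irr(G)=\bigsqcup_B\Irr(B)$; and $|D|=\max\{|G|_p/\chi(1)_p:\chi\in\Irr(B)\}$. Consequently, for each $p$-element class $[z]$ the table provides all values $\chi(zy)$ with $\chi\in\Irr(B)$ and $y$ a commuting $p$-regular element of $\C_G(z)$, organized up to the $\N_G(\langle z\rangle)$-action on $\C_G(z)$-classes of $p$-regular elements.

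I then reduce the geometric condition to a block-theoretic one: $z$ is $G$-conjugate into $\Z(D')$ for some defect group $D'$ of $B$ iff $\C_G(z)$ admits a block $b$ with $b^G=B$ and $d(b)=d(B)$. For $\Rightarrow$, if $z\in\Z(D')$ then $D'\le\C_G(z)$ and Brauer's first main theorem supplies such a $b$; for $\Leftarrow$, any defect group $D_b$ of a block $b$ of $\C_G(z)$ contains $z\in\pcore_p(\C_G(z))\le D_b$, so $|D_b|=|D|$ forces $D_b$ to be a defect group of $B$ with $z\in\Z(D_b)$. To detect this criterion from character values I invoke Brauer's second main theorem,
\[\chi(zy)=\sum_{\phi}d^{z}_{\chi,\phi}\,\phi(y),\]
with $\phi$ ranging over $\IBr(b)$ for blocks $b$ of $\C_G(z)$ satisfying $b^G=B$. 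Arranging the values into the matrix $M^z=(\chi(zy))_{\chi\in\Irr(B),\,[y]}$ yields $M^z=D^z X^z$, where $D^z$ is the generalized decomposition matrix for $(z,B)$, the columns of $X^z$ form an invertible Brauer-character table of $\bigsqcup_{b^G=B}b$, and $(D^z)^{*}D^z$ equals the block-diagonal Cartan matrix $C^z$ of that union by the orthogonality of generalized decomposition numbers. Using standard orthogonality of Brauer characters together with the centralizer sizes known from $X(G)$, enough invariants of $C^z$---in particular, the largest elementary divisor of each indecomposable block summand---can be read off from $M^z$. By a classical theorem of Brauer, those largest elementary divisors are exactly the orders $|D_b|$. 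Hence the multiset $\{|D_b|\}$ is $X(G)$-recoverable; comparing its maximum with $|D|$ decides the criterion for $[z]$, and maximizing $|z|$ over the passing classes yields $\exp(\Z(D))$.

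The hard part will be making the recovery of the Cartan invariants honestly intrinsic, since $X(G)$ only exposes $G$-classes of products $zy$ (not $\C_G(z)$-classes) and never identifies individual Brauer characters $\phi\in\IBr(\C_G(z))$. The associated $\N_G(\langle z\rangle)$- and Galois-ambiguities act on $C^z$ by simultaneous row/column permutations, preserving both its elementary divisors and its decomposition into indecomposable block-diagonal summands; verifying this invariance is what pins down $\max_b|D_b|$ unambiguously and completes the argument.
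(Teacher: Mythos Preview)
Your plan coincides with the paper's: for each $p$-element $z$, decide from the section matrix $M^z=(\chi(zy))$ whether $\C_G(z)$ has a Brauer correspondent of $B$ of full defect (equivalently, whether $z$ is conjugate into $\Z(D)$), then take the maximal such $|z|$. The paper additionally runs a Galois argument recomputing $|z|$ from the field generated by the entries of $M^z$, but that is redundant for the present theorem and is there to prepare the explicit formula in the abelian case.

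Two places where your write-up needs tightening. First, your $X^z$, restricted to rows in $\bigsqcup_{b^G=B}\IBr(b)$, is rectangular, not ``invertible''; the fix (which is what the paper does) is to let $D^z$ carry zero columns for \emph{all} of $\IBr(\C_G(z))$, so that the Brauer table $Y_z$ becomes the full square table, invertible over $\mathcal{O}$, and then $(M^z)^{\mathrm t}\,\overline{M^z}=Y_z^{\mathrm t}C_z\overline{Y_z}$ shares nonzero elementary divisors over $\mathcal{O}$ with the padded Cartan matrix $C_z$. Second, those elementary divisors do not expose the block decomposition of $C_z$ or the full multiset $\{|D_b|\}$, nor do you need them: each Cartan summand contributes its defect-group order exactly once as its unique largest elementary divisor, and every elementary divisor divides $|D|$, so the multiplicity of $|D|$ among the elementary divisors of $(M^z)^{\mathrm t}\,\overline{M^z}$ already equals the number of major subsections over $z$. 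Finally, your concern about $G$-classes versus $\C_G(z)$-classes dissolves: uniqueness of the $p$/$p'$-factorization gives $zy_1\sim_G zy_2\iff y_1\sim_{\C_G(z)}y_2$, so the columns of $M^z$ are unambiguously indexed and no $\N_G(\langle z\rangle)$-ambiguity arises.
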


If $D$ is known to be abelian, an explicit formula for $\exp(D)$ can be given in terms of the field of values
\[\QQ(B):=\QQ(\chi(g):\chi\in\Irr(B),\,g\in G).\]
For a positive integer $n$ we denote the $n$-th cyclotomic field by $\QQ_n$. 

\begin{Thm}\label{exp}
Let $B$ be a $p$-block of a finite group $G$ with abelian defect group $D\ne 1$. Let $m:=|G|_{p'}$. Then 
\[\boxed{\exp(D)=p|\QQ(B):\QQ(B)\cap\QQ_m|_p.}\]
If $|D|\le p^5$, then even the isomorphism type of $D$ is determined by the character table.
\end{Thm}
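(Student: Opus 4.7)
Set $p^a := \exp(D)$. The plan is to prove the formula by two matching inequalities, and then to handle the isomorphism type by identifying one additional invariant in each exceptional case.

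\emph{Upper bound.} First I show $\QQ(B) \subseteq \QQ_{mp^a}$. For $\chi \in \Irr(B)$ and $g = g_p g_{p'} \in G$ with $\chi(g) \neq 0$, Brauer's second main theorem gives
\[\chi(g) = \sum_\phi d^\chi_{g_p,\phi}\, \phi(g_{p'}),\]
the sum running over $\phi \in \IBr(b)$ for blocks $b$ of $\C_G(g_p)$ with $b^G = B$. Here $\phi(g_{p'}) \in \QQ_m$ and $d^\chi_{g_p,\phi} \in \QQ_{|g_p|}$. Since $\chi(g) \neq 0$ forces $g_p$ to be $G$-conjugate into $D$ (by the fact cited in the introduction), $|g_p|$ divides $p^a$ and hence $\chi(g) \in \QQ_{mp^a}$. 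As $\gcd(m,p) = 1$ yields $[\QQ_{mp^a}:\QQ_m] = (p-1)p^{a-1}$, we obtain $[\QQ(B):\QQ(B) \cap \QQ_m]_p \leq p^{a-1}$, equivalently $\exp(D) \geq p\cdot[\QQ(B):\QQ(B) \cap \QQ_m]_p$.

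\emph{Lower bound.} Fix $u \in D$ of order $p^a$ and a block $b$ of $\C_G(u)$ with $b^G = B$; abelianness of $D$ guarantees that $b$ has defect group $D$. For $\sigma \in \Gal(\overline\QQ/\QQ_m)$ with $\sigma(\zeta_{p^a}) = \zeta_{p^a}^k$, applying $\sigma$ to the Brauer decomposition $\chi(us) = \sum_\phi d^\chi_{u,\phi}\phi(s)$ with $s$ a $p'$-element of $\C_G(u)$ yields the equivariance $\sigma(d^\chi_{u,\phi}) = d^{\chi^\sigma}_{u^k,\phi}$. It therefore suffices to exhibit some $d^\chi_{u,\phi} \notin \QQ_{p^{a-1}}$. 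For cyclic $D$ this is a consequence of Dade's theorem, where the column at $u$ consists (up to signs) of all primitive $p^a$-th roots of unity. For general abelian $D$, an all-$\QQ_{p^{a-1}}$ column at $u$ would force a Galois identification between the subsection data at $u$ and at $u^k$ for some $k \equiv 1 \pmod{p^{a-1}}$, $k \not\equiv 1 \pmod{p^a}$, incompatible with the linear independence of columns of the generalized decomposition matrix at non-conjugate $p$-elements. Carrying this out rigorously is the main technical obstacle.

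\emph{Isomorphism type for $|D| \leq p^5$.} Abelian $p$-groups of order at most $p^5$ are determined by $(|D|, \exp(D))$ in all cases except three pairs: at $|D| = p^4, \exp = p^2$, where $D$ could be $C_{p^2}^2$ or $C_{p^2}\times C_p^2$; at $|D| = p^5, \exp = p^3$, where $D$ could be $C_{p^3}\times C_{p^2}$ or $C_{p^3}\times C_p^2$; and at $|D| = p^5, \exp = p^2$, where $D$ could be $C_{p^2}^2\times C_p$ or $C_{p^2}\times C_p^3$. In each pair the two groups have different rank $r = \log_p|D/D^p|$, so it suffices to compute $|D^p|$ from $X(G)$. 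My plan is to extract $r$ from the multiset of $p$-conjugate orbit sizes $\{[\QQ(\chi):\QQ(\chi) \cap \QQ_m]_p : \chi \in \Irr(B)\}$ — a refinement of the exponent formula sensitive to the whole orbit distribution, not merely its maximum. For cyclic defect this multiset is pinned down by Dade's theorem, and for the small abelian cases above one should be able to recover $|D^p|$ by a case analysis using the known local structure of blocks with small abelian defect groups.
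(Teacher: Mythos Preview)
Your upper bound is fine and matches the paper's observation that the generalized decomposition numbers at $x\in D$ lie in $\QQ_{|x|}$.

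For the lower bound you have the right skeleton but you explicitly flag the missing step as ``the main technical obstacle'', and indeed it is the heart of the matter. The issue is precisely why $(u,b)$ and $(u^k,b)$ with $k\equiv 1\pmod{p^{a-1}}$, $k\not\equiv 1\pmod{p^a}$ are \emph{non-conjugate} $B$-subsections. The paper resolves this by a fusion argument: since $D$ is abelian (so $u\in\Z(D)$), conjugacy of the subsections $(u,b)$ and $(u^\gamma,b)$ is controlled by the inertial group $\N_G(D,b_D)$ of a Brauer correspondent $b_D$ in $D\C_G(D)$; but the inertial quotient $\N_G(D,b_D)/D\C_G(D)$ is a $p'$-group, so $u$ and $u^k$, which differ by a non-trivial $p$-power, cannot be fused. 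Once that is known, if a non-trivial $p$-element $\gamma$ of $\Gal(\QQ_{p^am}|\QQ_m)$ fixed all character values $\chi(uy)$ (with $y$ running over $p'$-elements of $\C_G(u)$), then it would fix all $d^u_{\chi\phi}$ (because the Brauer character matrix $Y_u$ is invertible over $\mathcal{O}$), forcing the columns of the generalized decomposition matrix at $(u,\phi)$ and $(u^\gamma,\phi)$ to coincide---contradicting their linear independence. This gives $|\QQ(B)\QQ_m:\QQ_m|_p=p^{a-1}$ directly, with no separate appeal to Dade's theorem in the cyclic case.

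Your treatment of the isomorphism type for $|D|\le p^5$ is only a plan, not a proof, and the proposed invariant (the multiset of $p$-conjugacy orbit sizes on $\Irr(B)$) is not shown to separate the ambiguous pairs. The paper takes an entirely different and concrete route. Using the elementary-divisor computation of the preceding proposition, the character table determines, for each $p$-element $x$, the number of major $B$-subsections of the form $(x,b)$; since $D$ is abelian every subsection is major, and these are in bijection with the orbits of the inertial group $T$ on $D$. One therefore reads off from $X(G)$ the number of $T$-orbits on $D$ consisting of elements of each fixed order. Separating the coprime $T$-action from the Galois $p$-power action, the paper sets $d_i$ to be the number of classes, among elements of order $p^i$, under the relation $x\sim y\Longleftrightarrow x^t=y^{1+kp}$ for some $t\in T$, $k\in\ZZ$; these $d_i$ are computable from $X(G)$. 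A short combinatorial case analysis then shows $d_1\le d_2$ for $C_{p^2}^2$ but $d_1>d_2$ for $C_{p^2}\times C_p^2$, and analogously in the two $p^5$ ambiguities. Your idea of recovering the rank from Galois orbit sizes on $\Irr(B)$ may be salvageable, but as stated it is neither proved nor obviously true; note that the paper itself remarks that orbit-counting invariants of this general flavor already fail to distinguish certain abelian defect groups at defect~$7$.
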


Our last result is a block-wise version of \eqref{PP'}.

\begin{Thm}\label{PP'4}
Let $B$ be a $2$-block of a finite group $G$ with defect group $D$ of order $2^d\ge 8$. Then $|D/D'|=4$ if and only if $|\Irr(B)|<2^d$ and 
\[\QQ(B)\QQ_{|G|_{2'}}\cap\QQ_{2^d}=\QQ(\zeta\pm\zeta^{-1})\]
where $\zeta\in\CC$ is a primitive $2^{d-1}$-th root of unity.
In particular, the character table of $G$ determines if $B$ has tame representation type.
\end{Thm}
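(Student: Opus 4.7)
The plan rests on the classical theorem of Taussky: a $2$-group $D$ with $|D|\ge 8$ satisfies $|D/D'|=4$ if and only if $D$ is dihedral, semidihedral or generalized quaternion --- equivalently, $D$ is of maximal class, equivalently, a defect group of a tame $2$-block. Hence the ``in particular'' assertion will be automatic from the main equivalence, and the task is to characterize the three families via the two stated conditions.

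For the forward direction, assume $D$ is of maximal class of order $2^d$. The structure of $B$ has been fully described by Brauer, Olsson and Erdmann; in all three cases explicit formulas give $k(B)\le 2^{d-2}+4 < 2^d$ for $d\ge 3$, which yields the first condition. To identify the field $F:=\QQ(B)\QQ_m\cap\QQ_{2^d}$, I would fix a generator $u$ of the unique maximal cyclic subgroup $\langle u\rangle\le D$ of order $2^{d-1}$ and consider the subsection $(u,b_u)$, where $b_u\in\Bl(\C_G(u))$ has cyclic defect $\langle u\rangle$. Dade's theorem describes $\Irr(b_u)$ explicitly; by Brauer's second main theorem each value $\chi(u^kx)$, for $\chi\in\Irr(B)$ and $x\in\C_G(u)$ of odd order, is a $\QQ_m$-linear combination of generalized decomposition numbers times evaluations at $\zeta^{\pm k}$. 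The action of $\N_G(\langle u\rangle,b_u)/\C_G(u)$ on $\langle u\rangle$ --- inversion $u\mapsto u^{-1}$ in the dihedral and generalized quaternion cases, and $u\mapsto u^{2^{d-2}-1}$ in the semidihedral case --- forces Galois invariance under the corresponding subgroup of $\Gal(\QQ_{2^{d-1}}/\QQ)$, pinning $F$ to $\QQ(\zeta+\zeta^{-1})$ in the first two cases and $\QQ(\zeta-\zeta^{-1})$ in the third.

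For the converse, assume both conditions. Since $F$ has degree $2^{d-2}$ over $\QQ$ and sits inside $\QQ_{2^{d-1}}$, a Galois-theoretic argument refining \autoref{exp} (tracking the $2$-part of $\QQ(B)$ through a subsection at a $2$-element of largest order in $D$) yields $\exp(D)\ge 2^{d-1}$. Combined with $|D|=2^d$, this leaves only six candidates for $D$: the abelian groups $C_{2^d}$ and $C_{2^{d-1}}\times C_2$, the modular maximal-cyclic group $M_{2^d}$, and the three maximal-class groups $D_{2^d}$, $SD_{2^d}$, $Q_{2^d}$. For each of the first three, $\Aut(D)$ is a $2$-group, so any block $B$ with such defect is nilpotent. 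The two abelian cases give $k(B)=|D|=2^d$ by Broué--Puig, contradicting $|\Irr(B)|<2^d$; for $D=M_{2^d}$ one verifies $\QQ(D)=\QQ_{2^{d-2}}$ and uses the source-algebra description of nilpotent blocks to conclude $F\subseteq\QQ_{2^{d-2}}$, a field of degree $2^{d-3}<2^{d-2}$, contradicting the field hypothesis. Hence $D$ is of maximal class and $|D/D'|=4$.

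The main obstacle is the converse's derivation of $\exp(D)\ge 2^{d-1}$ from the field $F$, since \autoref{exp} applies only to abelian $D$; a non-abelian refinement of that argument, propagating $2$-local information through an explicit subsection, is required. The forward direction is by comparison mostly bookkeeping of Galois actions on generalized decomposition numbers, with the semidihedral case requiring the most care because the automorphism of $\langle u\rangle$ realised inside $D$ is not inversion.
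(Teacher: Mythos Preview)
Your overall strategy matches the paper's, but you have misidentified where the real difficulty lies in the converse, and the forward direction has a gap.

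In the forward direction, examining only the subsection at $u$ shows that $F\supseteq\QQ(\zeta\pm\zeta^{-1})$; it does not pin $F$ down. For equality you must also verify that \emph{every} generalized decomposition number of $B$ lies in $\QQ(\zeta\pm\zeta^{-1})$: for $y\in D$ of order at most $2$ the numbers are rational integers, and for $|\langle y\rangle|>2$ the Brauer correspondent $b_y$ again has cyclic defect, so the fusion $y\sim y^{-1}$ (resp.\ $y\sim y^{-1+2^{d-2}}$) forces $d^y_{\chi\mu}\in\QQ(\zeta\pm\zeta^{-1})$. The paper carries this out explicitly and then uses a Reynolds-type Galois argument to pass from $\QQ(Q)$ to $\QQ(B)\QQ_m\cap\QQ_{2^d}$. (Incidentally, $[\QQ(\zeta\pm\zeta^{-1}):\QQ]=2^{d-3}$, not $2^{d-2}$; these are index-$2$ subfields of $\QQ_{2^{d-1}}$.)

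For the converse, the inequality $\exp(D)\ge 2^{d-1}$ is \emph{not} the obstacle you describe, and no refinement of \autoref{exp} is required. The generalized decomposition numbers of $B$ always lie in $\QQ_{\exp(D)}$ --- this needs no abelian hypothesis --- so $\QQ(B)\QQ_m\subseteq\QQ_{\exp(D)\cdot m}$ and hence $F\subseteq\QQ_{\exp(D)}$. If $\exp(D)\le 2^{d-2}$ this gives $F\subseteq\QQ_{2^{d-2}}$; for $d\ge 4$ that already contradicts $F=\QQ(\zeta\pm\zeta^{-1})$, because $\QQ_{2^{d-2}}$, $\QQ(\zeta+\zeta^{-1})$ and $\QQ(\zeta-\zeta^{-1})$ are three \emph{distinct} index-$2$ subfields of $\QQ_{2^{d-1}}$. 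The genuine obstacle, which your outline misses, is the boundary case $d=3$: here $\zeta$ is a primitive $4$th root of unity, $\QQ(\zeta+\zeta^{-1})=\QQ$, and the field condition is compatible with $\exp(D)=2$, i.e.\ $D\cong C_2^3$. The paper closes this case by invoking Kessar--Koshitani--Linckelmann to obtain $k(B)=8=2^d$, contradicting the hypothesis $|\Irr(B)|<2^d$; it also notes that one can avoid the CFSG here via the remark after the proof of \autoref{expZ}. Your list of six candidate groups and the elimination of $C_{2^d}$, $C_{2^{d-1}}\times C_2$ and $M_{2^d}$ is otherwise in agreement with the paper.
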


Recall that a block $B$ with abelian defect group is nilpotent if and only if $B$ has inertial index $1$.
By work of Okuyama--Tsushima~\cite[Proposition~1 and Theorem~3]{OkuyamaTsushima}, $B$ is nilpotent with abelian defect group if and only if all characters in $\Irr(B)$ have the same degree. More generally, it has been conjectured (and verified in many cases) by Malle--Navarro~\cite{MNchardeg} that $B$ is nilpotent if and only if all height zero characters have the same degree (an invariant of $X(G)$). A different characterization of nilpotent blocks in terms of the focal subgroup was proved by Kessar--Linckelmann--Navarro~\cite{KLN}. It is however not clear if the (order of the) focal subgroup is encoded in $X(G)$. The same remark applies to another conjectural characterization by Puig in terms of counting Brauer characters in Brauer correspondents (see \cite[Conjecture~6.3.3]{CravenGuide} and \cite{WatanabeNil}). In the last section of this paper we propose a strengthening of Puig's Conjecture characterizing nilpotent blocks by a single invariant which is derived from lower defect groups and can be computed from $X(G)$ (see \autoref{con}). 

Note that \eqref{NPP} above does not admit a direct analog for non-principal blocks by Brauer's third main theorem (when $\N_G(P)$ is replaced by the inertial group of a Brauer correspondent). 

\section{Proofs}

Our notation is fairly standard and follows \cite{Navarro}. As usual, we set $k(B):=|\Irr(B)|$ and $l(B):=|\IBr(B)|$ for every block $B$ of a finite group $G$. The generalized decomposition matrix $Q=(d_{\chi\phi}^x)$ of $B$ has size $k(B)\times k(B)$ and entries in the cyclotomic field $\QQ_{\exp(D)}$ where $D$ is a defect group of $B$. The rows of $Q$ are indexed by $\chi\in\Irr(B)$ and the columns are indexed by pairs $(x,\phi)$ where $x\in D$ and $\phi\in\IBr(b)$ for some Brauer correspondent $b$ of $B$ in $\C_G(x)$. Let $\mathcal{G}$ be the Galois group of $\QQ_{|G|}$ with fixed field $\QQ_{|G|_{p'}}$. Characters in the same $\mathcal{G}$-orbit are called $p$-conjugate. Characters fixed by $\mathcal{G}$ are called $p$-rational.
We make use of the natural isomorphisms
\[\mathcal{G}\cong\Gal(\QQ_{|G|_p}|\QQ)\cong(\ZZ/|G|_p\ZZ)^\times.\]
In this way $\mathcal{G}$ acts on the rows and columns of $Q$ via
\[d_{\chi^\gamma,\phi}^x=\gamma(d^x_{\chi\phi})=d^{x^\gamma}_{\chi\phi}\qquad(\gamma\in\mathcal{G}).\]

We recall that the characters of a nilpotent block $B$ with defect group $D$ were parameterized by Broué--Puig~\cite{BrouePuig} using the so-called $*$-construction. More precisely, there exists a $p$-rational character $\chi\in\Irr(B)$ of height $0$ such that $\Irr(B)=\{\lambda*\chi:\lambda\in\Irr(D)\}$. For $\gamma\in\mathcal{G}$, we have $(\lambda*\chi)^\gamma=(\lambda^\gamma)*\chi$.

\begin{proof}[Proof of \autoref{cyclic}]
Let $D$ be a defect group of $B$. If $D$ is not cyclic, then the generalized decomposition matrix $Q$ has entries in $\QQ_{p^{d-1}}$. Hence, the lengths of the $\mathcal{G}$-orbits on the rows of $Q$ divide $\phi(p^{d-1})=p^{d-2}(p-1)$. So there is no family of $p$-conjugate characters in $\Irr(B)$ of size divisible by $p^{d-1}$.

Now suppose that $D=\langle x\rangle$ is cyclic. If $p=2$, then $B$ is nilpotent, because the inertial index of $B$ is $1$. By Broué--Puig~\cite{BrouePuig}, we have $\Irr(B)=\{\lambda*\chi:\lambda\in\Irr(D)\}$ for some $2$-rational $\chi$.
Let $\lambda_1,\ldots,\lambda_{2^{d-1}}$ be the faithful characters of $\Irr(D)$. Then $\lambda_1*\chi,\ldots,\lambda_{2^{d-1}}*\chi$ is a family of $2$-conjugate characters of $B$ of size $2^{d-1}$.
Finally let $p>2$. Then $\mathcal{G}\cong (\ZZ/|G|_p\ZZ)^\times$ is cyclic and the rows and columns of $Q$ form isomorphic $\mathcal{G}$-sets by Brauer's permutation lemma (see \cite[Lemma~IV.6.10]{Feit}). Let $b$ be a Brauer correspondent of $B$ in $\C_G(D)$. 
For $u\in D\setminus\{1\}$ the Brauer correspondents $b_u:=b^{\C_G(u)}$ are nilpotent. In particular, $l(b_u)=1$ and every such $u$ labels a unique column of $Q$. Two elements $u,v\in D$ determine the same column of $Q$ if and only if they are conjugate under the inertial quotient $N:=\N_G(D,b)/\C_G(D)$. We regard $N$ as a $p'$-subgroup of $\Aut(D)$. Since all generators of $D$ are conjugate under $\mathcal{G}$, the $\mathcal{G}$-orbit of the column of $Q$ labeled by $x$ has size $|\Aut(D):N|\equiv 0\pmod{p^{d-1}}$. The corresponding orbit on the rows of $Q$ yields the desired family of $p$-conjugate characters of $\Irr(B)$
\end{proof}

We remind the reader that every $x\in G$ can be written uniquely as $x=x_px_{p'}=x_{p'}x_p$ where the $p$-factor $x_p$ is a $p$-element and the $p'$-factor $x_{p'}$ is a $p'$-element. The $p$-section of $x$ is the set of elements $y\in G$ such that $x_p$ and $y_p$ are conjugate. 

In the following we work over a “large enough“ complete discrete valuation ring $\mathcal{O}$ such that the residue field $\mathcal{O}/\J(\mathcal{O})$ is algebraically closed of characteristic $p$.
The remaining theorems are based on the following observation.

\begin{Prop}\label{major}
Let $B$ be a $p$-block of $G$ with defect group $D$. 
For a given $p$-element $x\in G$, the character table determines the number of Brauer correspondents of $B$ in $\C_G(x)$ with defect group $D$.
\end{Prop}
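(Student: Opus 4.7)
The plan is to extract the desired count from the restriction of $\Irr(B)$ to the $p$-section of $x$, combining Brauer's second main theorem with Brauer-character orthogonality so as to convert character-table data into the block-theoretic invariants of the Brauer correspondents of $B$ in $\C_G(x)$. A first observation is that the $p$-section $\mathrm{sec}_p(x)$ is an invariant of $X(G)$: element orders and the $p$-power map on $G$-conjugacy classes are read off from the character table, so the columns of $X(G)$ lying in $\mathrm{sec}_p(x)$ are identified.

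Next, for $\chi,\psi\in\Irr(B)$ I would compute the Hermitian matrix
\[M^{(x)}_{\chi,\psi}:=\frac{1}{|G|}\sum_{g\in\mathrm{sec}_p(x)}\chi(g)\overline{\psi(g)}\]
directly from $X(G)$. Writing $\chi(xy)=\sum_{b}\sum_{\phi\in\IBr(b)}d^x_{\chi\phi}\phi(y)$ for $y\in\C_G(x)_{p'}$ by Brauer's second main theorem (with $b$ ranging over Brauer correspondents of $B$ in $\C_G(x)$) and invoking the Brauer-character orthogonality $\frac{1}{|\C_G(x)|}\sum_{z\in\C_G(x)_{p'}}\phi(z)\overline{\phi'(z)}=(C_b^{-1})_{\phi\phi'}$ (which vanishes for $\phi,\phi'$ in distinct blocks of $\C_G(x)$), a direct computation gives
\[M^{(x)}=\sum_{b}Q_b^xC_b^{-1}(Q_b^x)^{*},\]
an orthogonal sum of mutually orthogonal rank-$l(b)$ projections $M_b:=Q_b^xC_b^{-1}(Q_b^x)^{*}$ indexed by the Brauer correspondents~$b$.

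To recognize which $M_b$ come from correspondents of full defect, I would exploit two pieces of structure: (i) the entries of $M_b$ lie in $\QQ_{\exp(D_b)}$, so the Galois group $\mathcal{G}\cong(\ZZ/|G|_p\ZZ)^{\times}$ acts on $M_b$ through $\Gal(\QQ_{\exp(D_b)}|\QQ)$; and (ii) the largest elementary divisor of the Cartan matrix $C_b$ equals $|D_b|$. Refining $M^{(x)}$ according to its $\mathcal{G}$-fixed sublattices (indexed by the intermediate cyclotomic fields $\QQ_{p^e}$) should extract the partial sums $\sum_{b\,:\,\exp(D_b)\le p^e}M_b$; combining this with the elementary-divisor characterization of $|D_b|$ isolates precisely those $b$ with $|D_b|=|D|$. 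Since $|D|$ itself is already an invariant of $X(G)$, this singles out the count we want.

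The main obstacle is this last disentanglement step: while $M^{(x)}$ is plainly an $X(G)$-invariant, an orthogonal decomposition of a projection is in general highly non-unique, so separating the mutually orthogonal summands $M_b$ block by block relies on a subtle interplay between the Galois/arithmetic structure and the Cartan elementary divisors. A cleaner alternative route would be to apply Brauer's first main theorem in $\C_G(x)$ together with the Harris--Kn\"orr correspondence to reduce to the case $D\unlhd G$, in which the Brauer correspondents of $B$ in $\C_G(x)$ with defect group $D$ are in bijection with specific blocks of $\C_G(D)$ covered by $B$ and are then countable from $X(G)$ via standard normal-subgroup techniques.
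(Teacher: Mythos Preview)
There is a genuine gap in your main approach. Your matrix $M^{(x)}$ is an \emph{idempotent}: since $(Q_b^x)^{*}Q_b^x=C_b$, each summand $M_b=Q_b^xC_b^{-1}(Q_b^x)^{*}$ is an orthogonal projection of rank $l(b)$, and so is their orthogonal sum. Hence $M^{(x)}$ retains only the total rank $\sum_b l(b)$; all Cartan information has been washed out. Your proposed Galois filtration cannot repair this, for two reasons. First, even if one could isolate the partial sum $\sum_{\exp(D_b)\le p^e}M_b$, this is again merely a projection, so no elementary-divisor argument can recover $|D_b|$ from it. Second, $\exp(D_b)$ does not determine $|D_b|$, so filtering by exponent does not single out the correspondents of maximal defect. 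You yourself flag the disentanglement step as ``the main obstacle'', and indeed it is fatal as stated. The alternative Harris--Kn\"orr route is also not viable: that correspondence lives inside $\C_G(x)$, whose character table is \emph{not} at your disposal, and reducing to $D\unlhd G$ in $G$ says nothing about normality of $D$ in $\C_G(x)$.

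The paper's fix is simple: take the Gram matrix on the other side. Instead of the $k(B)\times k(B)$ matrix $M^{(x)}$, form the $l\times l$ matrix $X_x^{\mathrm t}\overline{X_x}$, where $X_x=(\chi(xy_i))$ records $\Irr(B)$ on the $p$-section of $x$ and $y_1,\ldots,y_l$ represent the $p$-regular classes of $\C_G(x)$. Writing $X_x=Q_xY_x$ with $Y_x=(\phi_i(y_j))$ the Brauer table of $\C_G(x)$, one obtains $X_x^{\mathrm t}\overline{X_x}=Y_x^{\mathrm t}C_x\overline{Y_x}$, where $C_x$ is the block-diagonal of the Cartan matrices of the Brauer correspondents of $B$ (padded by zeros). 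The crucial point you are missing is that $Y_x$ is a \emph{unit over} $\mathcal{O}$, so $X_x^{\mathrm t}\overline{X_x}$ and $C_x$ have the same elementary divisors up to units. Since $|D_b|$ occurs exactly once as the largest elementary divisor of each $C_b$, the multiplicity of $|D|$ among the elementary divisors of $X_x^{\mathrm t}\overline{X_x}$ is precisely the number of Brauer correspondents of $B$ in $\C_G(x)$ with defect group $D$.
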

\begin{proof}
We assume that the column of the character table $X=X(G)$ corresponding to $x$ is given.
Let $q\ne p$ be another prime. By \cite[Theorem~7.16]{Navarro2}, we find all elements $g\in G$ such that the $q'$-factor of $g$ is conjugate to $x$. By induction on the number of prime divisors of the order of an element, the whole $p$-section $S$ of $x$ can be spotted in $X$.
Let $y_1,\ldots,y_l\in\C_G(x)$ be representatives for the conjugacy classes of $p$-regular elements in $\C_G(x)$. Then the elements $xy_1,\ldots,xy_l$ represent the conjugacy classes inside $S$ (see \cite[p.~105]{Navarro}). Let $\IBr(\C_G(x))=\{\phi_1,\ldots,\phi_l\}$.
We construct the matrices 
\begin{align}
X_x&:=\bigl(\chi(xy_i):\chi\in\Irr(B),1\le i\le l\bigr),\nonumber\\
Q_x&:=\bigl(d^x_{\chi\phi_i}:\chi\in\Irr(B),1\le i\le l\bigr),\label{Qx}\\
Y_x&:=\bigl(\phi_i(y_j):1\le i,j\le l\bigr)\nonumber
\end{align}
and observe that $X_x=Q_xY_x$ can be read off of $X$ (see \cite[Corollary~5.8]{Navarro}). Let $b_1,\ldots,b_s$ be the Brauer correspondents of $B$ in $\C_G(x)$. Let $C_i$ be the Cartan matrix of $b_i$ for $i=1,\ldots,s$. Finally, let
\[C_x:=\begin{pmatrix}
C_1&&&0\\
&\ddots\\
&&C_s&\\
0&&&0
\end{pmatrix}\in\ZZ^{l\times l}.\]
Brauer's second main theorem yields
\[X_x^\text{t}\overline{X_x}=Y_x^\text{t}Q_x^\text{t}\overline{Q_xY_x}=Y_x^\text{t}C_x\overline{Y_x}\]
where $X_x^\text{t}$ denotes the transpose and $\overline{X_x}$ the complex conjugate of $X_x$ (see \cite[Lemma~5.13]{Navarro}). 
We may assume that the entries of $X_x,Q_x,Y_x$ lie in the valuation ring $\mathcal{O}$ (recall that these entries are algebraic integers). 
It follows from \cite[Lemma~2.4 and Theorem~1.19]{Navarro} that $Y_x$ is invertible over $\mathcal{O}$. In particular, $X_x^\text{t}\overline{X_x}$ and $C_x$ have the same elementary divisors up to multiplication with units in $\mathcal{O}$ (recall that $\mathcal{O}$ is indeed a principal ideal domain). The largest elementary divisor of $C_i$ is the order of a defect group $D_i$ of $b_i$ and occurs with multiplicity $1$ in $C_i$ (see \cite[Theorem~3.26]{Navarro}). Since $D_i$ is conjugate to a subgroup of $D$, all non-zero elementary divisors of $C_x$ are divisors of $|D|$. Moreover, the number of blocks $b_i$ with defect group $D$ is just the multiplicity of $|D|$ as an elementary divisor of $C_x$.
\end{proof}

In the situation of \autoref{major}, the pairs $(x,b)$ are called ($B$-)\emph{subsections} if $b$ is a Brauer correspondent of $B$ in $\C_G(x)$. The subsection is called \emph{major} if $b$ and $B$ have the same defect (group). One might wonder if the total number of subsections (major or not) can be deduced from the block decomposition of the hermitian matrix $X_x^\mathrm{t}\overline{X_x}$. However, if $B$ is the only block of $G$, then $X_x^\mathrm{t}\overline{X_x}$ is just a diagonal matrix by the second orthogonality relation.

\begin{proof}[Proof of \autoref{expZ}]
The columns of the character table $X=X(G)$ corresponding to $p$-elements are determined via \cite[Corollary~7.17]{Navarro2}.
For a $p$-element $x\in G$, we can decide from $X$ whether there are major subsections $(x,b)$ by \autoref{major}. 
This happens if and only if $x$ is conjugate to some element of $\Z(D)$ (see \cite[Problem~9.6]{Navarro}).
Thus, suppose that $x\in\Z(D)$ has order $p^e$ and $b$ has defect group $D$. Then the matrix $Q_x$ defined in \eqref{Qx} of the previous proof has entries in $\QQ_{p^e}$. The entries of $X_x=Q_xY_x$ generate a subfield $\QQ(X_x)\subseteq\QQ_{p^em}$ where $m:=|G|_{p'}$.
Let $b_D$ be a Brauer correspondent of $b$ (and of $B$) in $D\C_G(D)$ such that $b=b_D^{\C_G(x)}$. In the following we replace $\mathcal{G}$ by the (smaller) Galois group of $\QQ_{p^em}$ with fixed field $\QQ_m$. 
Let $\gamma\in\mathcal{G}$ be a non-trivial $p$-element. By a fusion argument of Burnside, the $B$-subsections $(x,b)$ and $(x^\gamma,b)$ are conjugate in $G$ if any only if $x$ and $x^\gamma$ are conjugate in the inertial group $\N_G(D,b_D)$ (see \cite[Problem~9.7]{Navarro}). Since $x\in\Z(D)$ and $\N_G(D,b_D)/D\C_G(D)$ is a $p'$-group, this cannot happen. Hence, there exist $\chi\in\Irr(B)$ and $\phi\in\IBr(b)$ such that
\[d_{\chi^\gamma,\phi}^x=\gamma(d^x_{\chi\phi})=d^{x^\gamma}_{\chi\phi}\ne d^x_{\chi\phi}\]
and $\chi^\gamma\ne\chi$.  
This shows that $\QQ(X_x)$ does not lie in the fixed field of any non-trivial $p$-element of $\mathcal{G}$. 
Hence by Galois theory, $|\QQ_{p^em}:\QQ(X_x)\QQ_m|$ is a $p'$-number and
\[|\QQ(X_x)\QQ_m:\QQ_m|_p=|\QQ_{p^em}:\QQ_m|_p=|\mathcal{G}|_p=p^{e-1}.\]
Therefore, $X$ determines the order of every $x\in\Z(D)$. In particular, $\exp(\Z(D))$ is determined.
\end{proof}

By the proof above, the character table determines whether all $x\in D$ are conjugate to elements of $\Z(D)$. This is a necessary (but insufficient) criterion for $D$ to be abelian.
Next we prove the first part of \autoref{exp}.

\begin{Prop}
Let $B$ be a $p$-block of $G$ with abelian defect group $D$. 
Let $m:=|G|_{p'}$. Then 
\[\exp(D)=p|\QQ(B):\QQ(B)\cap\QQ_m|_p.\]
\end{Prop}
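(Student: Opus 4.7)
The plan is to combine the key inequality obtained in the proof of \autoref{expZ} with the abelian hypothesis, which forces $\Z(D)=D$, and then translate the statement into one about the compositum $\QQ(B)\QQ_m$ via standard Galois theory.

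Set $p^e:=\exp(D)$. Since $D$ is abelian, $\Z(D)=D$, and therefore $\exp(\Z(D))=p^e$. Picking $x\in D=\Z(D)$ of order $p^e$, the argument already carried out in the proof of \autoref{expZ} produces a subfield $\QQ(X_x)\subseteq\QQ(B)$ with
\[
|\QQ(X_x)\QQ_m:\QQ_m|_p=p^{e-1}.
\]
This gives the lower bound $|\QQ(B)\QQ_m:\QQ_m|_p\ge p^{e-1}$. For the matching upper bound I would invoke Brauer's second main theorem: for any $g\in G$ and $\chi\in\Irr(B)$ with $\chi(g)\ne 0$, the $p$-part $g_p$ is conjugate to an element of $D$ and hence has order dividing $p^e$, so
\[
\chi(g)=\sum_\phi d^{g_p}_{\chi\phi}\phi(g_{p'})\in\QQ_{p^e}\QQ_m=\QQ_{p^em}.
\]
Thus $\QQ(B)\subseteq\QQ_{p^em}$ and $|\QQ(B)\QQ_m:\QQ_m|_p\le|\QQ_{p^em}:\QQ_m|_p=p^{e-1}$, yielding equality.

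To convert this into the formula as stated, I would apply the standard fact that if $L/\QQ$ is Galois then $[LK:K]=[L:L\cap K]$ for any intermediate field $K$; taking $L=\QQ_m$ (which is abelian over $\QQ$) and $K=\QQ(B)$ gives
\[
|\QQ(B)\QQ_m:\QQ_m|=|\QQ(B):\QQ(B)\cap\QQ_m|,
\]
so that $|\QQ(B):\QQ(B)\cap\QQ_m|_p=p^{e-1}$ and therefore $\exp(D)=p^e=p|\QQ(B):\QQ(B)\cap\QQ_m|_p$.

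There is no real obstacle beyond bookkeeping once \autoref{expZ} is in hand: the abelian hypothesis is used only to identify $\Z(D)$ with $D$ so that the element of maximal order provided by that proof already realizes $\exp(D)$, and Brauer's second main theorem cleanly bounds $\QQ(B)$ inside $\QQ_{p^em}$. The only point to watch is that the lower-bound construction from \autoref{expZ} genuinely produces values lying in $\QQ(B)$ (it does, since the entries of $X_x$ are values $\chi(xy_i)$ with $\chi\in\Irr(B)$), so no further work is needed to identify $\QQ(X_x)$ as a subfield of $\QQ(B)$.
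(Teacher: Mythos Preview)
Your argument is correct and follows essentially the same route as the paper: both reduce to the computation in the proof of \autoref{expZ}, using that $D$ abelian means $\Z(D)=D$ (equivalently, all $B$-subsections are major), and then pass from $\QQ(X_x)$ to $\QQ(B)$. The paper phrases this passage as ``there is no need to consider only one $p$-section at a time,'' whereas you split it into a lower bound from a single $x$ of maximal order and an upper bound $\QQ(B)\subseteq\QQ_{p^em}$ via Brauer's second main theorem; these are the same ingredients, just organized differently. One small slip: with your choice $L=\QQ_m$, $K=\QQ(B)$ the identity $[LK:K]=[L:L\cap K]$ yields $[\QQ(B)\QQ_m:\QQ(B)]=[\QQ_m:\QQ(B)\cap\QQ_m]$, not the equation you wrote; to get $[\QQ(B)\QQ_m:\QQ_m]=[\QQ(B):\QQ(B)\cap\QQ_m]$ directly, take instead $L=\QQ(B)$ (which is Galois over $\QQ$, being contained in a cyclotomic field) and $K=\QQ_m$, or derive it from your version via the tower law.
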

\begin{proof}
Since $D$ is abelian, all $B$-subsections are major (see \cite[Problem~9.6]{Navarro}). Hence, in the proof of \autoref{expZ} there is no need to consider only one $p$-section at a time. In the end, we can replace $\QQ(X_x)$ by $\QQ(B)$ to obtain
\[p|\QQ(B):\QQ(B)\cap\QQ_m|_p=p|\QQ(B)\QQ_m:\QQ_m|_p=\exp(D).\qedhere\]
\end{proof}

Now we come to the second part of \autoref{exp}.

\begin{Prop}
Let $B$ be a block of $G$ with abelian defect group $D$ and defect at most $5$. Then $X(G)$ determines the isomorphism type of $D$.
\end{Prop}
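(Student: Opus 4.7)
From the preceding proposition we read off $\exp(D)$ from $X(G)$, and $|D|$ is given by the formula recalled in the introduction. Running through the abelian $p$-groups of order at most $p^5$, the pair $(|D|,\exp(D))$ determines the isomorphism type in all but three cases, namely the pairs
\[(\ZZ/p^2)^2 \text{ vs.\ } \ZZ/p^2\times(\ZZ/p)^2,\qquad \ZZ/p^3\times\ZZ/p^2 \text{ vs.\ } \ZZ/p^3\times(\ZZ/p)^2,\]
and $(\ZZ/p^2)^2\times\ZZ/p$ vs.\ $\ZZ/p^2\times(\ZZ/p)^3$. In each pair the two candidates differ in $p$-rank, equivalently in $|\Omega_1(D)|$; so the proof reduces to reading off the rank of $D$ from $X(G)$.

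To extract the rank, I would combine the Galois analysis of the proof of \autoref{expZ} with the subsection count from \autoref{major}. For every $G$-conjugacy class $K$ of $p$-elements, \autoref{major} detects whether $K$ meets $D$ (by the existence of a major subsection) and the argument of \autoref{expZ} then returns the common order of the elements of $K$. Moreover, \autoref{major} records the number $n(x)$ of Brauer correspondents of $B$ in $\C_G(x)$ of defect group $D$, which for abelian $D$ is the total number of $B$-Brauer correspondents in $\C_G(x)$. Using Alperin's fusion theorem for the maximal Brauer pair $(D,b_D)$ one sees that $n(x)$ encodes the $\N_G(D,b_D)/\C_G(D)$-orbit of the cyclic subgroup $\langle x\rangle$ inside $D$; combining this with the size of the $I$-orbit of $x$ itself (where $I=\N_G(D,b_D)/D\C_G(D)$ is the inertial quotient) reconstitutes $|K\cap D|$. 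Summing over classes whose elements have order dividing $p$ then recovers $|\Omega_1(D)|$, and hence the rank.

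The main obstacle is the translation from the subsection count $n(x)$ to the intersection size $|K\cap D|$: several generators of the same cyclic subgroup of $D$ can lie in the same $G$-class without giving distinct Brauer correspondents, so the conversion is not purely local. Fortunately the three outstanding pairs are small enough that one can settle the matter by a direct case analysis over the admissible $p'$-subgroups $I$ of $\Aut(D)$; in every instance the invariants $\{(|x|,n(x))\}$ separate the two candidates of differing rank.
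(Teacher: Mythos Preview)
Your reduction to the three ambiguous pairs is the same as the paper's, and so is the underlying idea of exploiting the subsection count from \autoref{major} together with the Galois/order information from the proof of \autoref{expZ}. What you can genuinely read off from $X(G)$ in this way is, for each $i\ge 0$, the number of $I$-orbits on the set $\{x\in D:|x|=p^i\}$, where $I=\N_G(D,b_D)/\C_G(D)$ is the inertial quotient acting on the abelian group $D$. The paper uses exactly this datum.

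The gap in your plan is the step in between. The assertion that $n(x)$ ``encodes the $I$-orbit of $\langle x\rangle$'' and that one can thereby reconstitute $|K\cap D|$ is not correct as stated: $n(x)$ only tells you how many $I$-orbits on $D$ fall into the $G$-class of $x$, and since $I$ itself is not visible in $X(G)$ you cannot convert this into the cardinality $|K\cap D|$ without already knowing the orbit sizes. In particular there is no route from $X(G)$ to $|\Omega_1(D)|$ along these lines; indeed the paper observes after the proof that for defect~$7$ the orbit-count data fail to separate $C_4^3\times C_2$ from $C_4^2\times C_2^3$, so the rank is \emph{not} in general recoverable from this information. Your final sentence (``direct case analysis over the admissible $p'$-subgroups $I$'') is therefore where all the content lies, and you have not carried it out.

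The paper's device for that case analysis is worth noting, since it avoids enumerating $I$. Call $x,y\in D$ \emph{equivalent} if $x^t=y^{1+kp}$ for some $t\in I$ and $k\in\ZZ$, and let $d_i$ be the number of equivalence classes of elements of order $p^i$. Because $I$ acts coprimely, the number of $I$-orbits on elements of order $p^i$ equals $d_ip^{i-1}$, so the $d_i$ are themselves invariants of $X(G)$. One then shows, uniformly in $I$, that $d_1\le d_2$ for $C_{p^2}^2$ while $d_1>d_2$ for $C_{p^2}\times C_p^2$; the two defect-$5$ ambiguities reduce to this or are handled by the analogous inequality $d_1<d_2$ for $C_{p^2}^2\times C_p$ versus $d_1>d_2$ for $C_{p^2}\times C_p^3$. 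This comparison of $d_1$ and $d_2$ is the missing idea in your plan.
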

\begin{proof}
Since $|D|$ and $\exp(D)$ are determined by the character table, we may assume that $|D|\in\{p^4,p^5\}$. Let $T$ be the inertial group of some Brauer correspondent of $B$ in $\C_G(D)$. Since $D$ is abelian, the $G$-conjugacy classes of $B$-subsections correspond to the $T$-orbits on $D$ by \cite[Problems~9.6 and 9.7]{Navarro}. 
For a fixed $x\in D$, the $B$-subsections of the form $(x,b)$ are pairwise non-conjugate since the blocks are ideals of the group algebra of $\C_G(x)$. Since all $B$-subsections are major, \autoref{major} allows us to count the number of subsections (up to conjugation) corresponding to elements $x\in D$ of some fixed order (note that $|\langle x\rangle|$ is determined by the proof of \autoref{expZ}). Hence, the character table determines the number of $T$-orbits on $D$ of elements of order $p^i$ for each $i\ge 0$. 

We call $x,y\in D$ \emph{equivalent} if there exist $t\in T$ and $k\in\ZZ$ such that $x^t=y^{1+kp}$. Equivalent elements clearly have the same order. Let $d_i$ be the number of equivalence classes of elements in $D$ of order $p^i$ for $i\ge 1$. 
Since $T$ acts coprimely on $D$, the distinct elements of the form $x^{1+kp}$ lie in distinct $T$-orbits. Hence, the number of $T$-orbits of elements of order $p^i$ is $d_ip^{i-1}$. In particular, the numbers $d_i$ are determined by $X(G)$.
Note that $d_1$ is just the number of $T$-orbits of elements of order $p$. 

Now we assume that $|D|=p^4$. It suffices to distinguish $D\cong C_{p^2}^2$ from $D\cong C_{p^2}\times C_p^2$. 
Suppose first that $D\cong C_{p^2}^2$. Then every element of order $p$ in $D$ is a $p$-power of some element in $D$. Moreover, if $x,y\in D$ are equivalent, so are $x^p$ and $y^p$. This shows that $d_1\le d_2$.
Next we consider $D=D_1\times D_2\cong C_{p^2}\times C_p^2$. Since $T$ acts coprimely on $D$, we may assume that $D_1\cong C_{p^2}$ and $D_2\cong C_p^2$ are $T$-invariant (see \cite[Theorem~5.2.2]{Gorenstein}). Let $x_1,y_1\in D_1$ be of order $p^2$. Since $D_1$ is cyclic, we see that $x_1$ and $y_1$ are equivalent if and only if $x_1^p$ and $y_1^p$ are equivalent. For any $x_2,y_2\in D_2$, it follows that $x_1x_2$ and $y_1y_2$ are equivalent if and only if $x_1^px_2$ and $y_1^py_2$ are equivalent. Every element of order $p^2$ has the form $x_1x_2$, but the elements of $D_2$ do not have the form $x_1^px_2$. Consequently, $d_1>d_2$.

It remains to discuss the case $|D|=p^5$. If $\exp(D)=p^3$, then we need to distinguish $C_{p^3}\times C_{p^2}$ from $C_{p^3}\times C_p^2$. But this follows immediately from the case $|D|=p^4$ above by considering only elements of order at most $p^2$. Hence, we may assume that $\exp(D)=p^2$. If $D\cong C_{p^2}\times C_p^3$, we obtain $d_1>d_2$ just as in the case $C_{p^2}\times C_p^2$. Finally, let $D=D_1\times D_2\cong C_{p^2}^2\times C_p$ with $T$-invariant subgroups $D_1$ and $D_2$. 
Let $\Delta\subseteq D_1$ be a $T$-orbit of elements of order $p$. Let $\widehat{\Delta}:=\{y\in D_1:y^p\in\Delta\}$. Note that $\widehat{\Delta}$ is a union of equivalence classes and $|\widehat{\Delta}|=p^2|\Delta|$. Since the size of an equivalence class cannot be divisible by $p^2$, $\widehat{\Delta}$ contains at least two equivalence classes. For $x\in\Delta$ we pick non-equivalent elements $\widehat{x},\widetilde{x}\in\widehat{\Delta}$. 
Let $z_0=1,z_1,\ldots,z_s$ be representatives for the $T$-orbits in $D_2$. Let $x,y\in D_1$ be of order $p$. Since $xz_0,\ldots,xz_s$ lie in distinct $T$-orbits, we obtain that $d_1-s>s$. Moreover, if $xz_i$ and $yz_j$ are not equivalent, then $\widehat{x}z_i$, $\widetilde{x}z_i$, $\widehat{y}z_j$ and $\widetilde{y}z_j$ are pairwise non-equivalent elements of order $p^2$. Since every element of order $p$ outside $D_2$ is equivalent to some $xz_i$, it follows that $d_1<2(d_1-s)\le d_2$.
\end{proof}

We do not know if our method extends to blocks of defect $6$, but it definitely does not work for defect $7$. 
In fact, the defect groups $C_4^3\times C_2$ and $C_4^2\times C_2^3$ cannot be distinguished by counting orbits of the inertial quotient $C_7\rtimes C_3$ (given a suitable action, there are three orbits of involutions and eight orbits of elements of order $4$ in both cases). Nevertheless, these groups can still be distinguished by other means.

Finally we prove our last theorem.

\begin{proof}[Proof of \autoref{PP'4}]
Suppose first that $|D/D'|=4$. Then $D$ is a dihedral, a semidihedral or a (generalized) quaternion group by a theorem of Taussky (see \cite[Satz~III.11.9]{Huppert}). It was shown by Brauer and Olsson that $k(B)<2^d$ (see \cite[Theorem~8.1]{habil}). They have also computed the generalized decomposition numbers of $B$, but we only need a small portion of those. For that, let $x\in D$ be of order $2^{d-1}$ and let $b_x$ be a Brauer correspondent of $B$ in $\C_G(x)$. Then $b_x$ is a block with cyclic defect group $\langle x\rangle$. In particular, $b_x$ is nilpotent and $\IBr(b_x)=\{\phi\}$. If $D$ is a dihedral or a quaternion group, then $x$ is conjugate to $x^{-1}$ in $D$. From the structure of the fusion system of $B$ (see \cite[Theorem~8.1]{habil}) we see that there is no more fusion inside $\langle x\rangle$. It follows as in \autoref{expZ} that $\QQ(d^x_{\chi\phi}:\chi\in\Irr(B))=\QQ(\zeta+\zeta^{-1})$. In the semidihedral case we obtain similarly that $\QQ(d^x_{\chi\phi}:\chi\in\Irr(B))=\QQ(\zeta-\zeta^{-1})$.

Next let $y\in D$ be arbitrary. If $y$ has order at most $2$, then the generalized decomposition numbers with respect to $y$ are rational integers. Thus, we may assume that $|\langle y\rangle|>2$. Then $y$ and $y^{-1}$ (or $y^{-1+2^{d-2}}$ if $D$ is semidihedral) are conjugate in $D$ and the Brauer correspondent $b_y$ has cyclic defect group (namely $\langle y\rangle$ or $\langle x\rangle$; see \cite[Lemma~1.34]{habil}). Hence, $\IBr(b_y)=\{\mu\}$ and the argument above yields $d^y_{\chi\mu}\in\QQ(\zeta\pm\zeta^{-1})$ for every $\chi\in\Irr(B)$. 
Therefore, the entries of the generalized decomposition matrix $Q$ of $B$ generate the field $\QQ(Q)=\QQ(\zeta\pm\zeta^{-1})\subseteq\QQ_{2^d}$. 
Let $m:=|G|_{2'}$ and $\gamma\in\Gal(\QQ_{2^d}|\QQ)$. Let $\widehat\gamma$ be the unique extension of $\gamma$ to $\Gal(\QQ_{2^dm}|\QQ_m)$. Then 
\begin{align*}
\gamma\in\Gal(\QQ_{2^d}|\QQ(Q))&\Longleftrightarrow\widehat\gamma\in\Gal(\QQ_{2^dm}|\QQ(Q)\QQ_m)=\Gal(\QQ_{2^dm}|\QQ(B)\QQ_m)\\
&\Longleftrightarrow\gamma\in\Gal(\QQ_{2^d}|\QQ(B)\QQ_m\cap\QQ_{2^d})
\end{align*}
(this argument is due to Reynolds~\cite{ReynoldsField}). The main theorem of Galois theory implies $\QQ(B)\QQ_m\cap\QQ_{2^d}=\QQ(Q)=\QQ(\zeta\pm\zeta^{-1})$ as desired. 

Now assume conversely that $k(B)<2^d$ and $\QQ(B)\QQ_m\cap\QQ_{2^d}=\QQ(\zeta\pm\zeta^{-1})$. 
If $D$ is cyclic or of type $C_{2^{d-1}}\times C_2$ with $d\ge 3$, then $B$ is nilpotent in contradiction to $k(B)<2^d$.
Suppose that $\exp(D)<2^{d-1}$. Then the generalized decomposition numbers of $B$ lie in $\QQ_{2^{d-2}}$ and we obtain 
\[\zeta\pm\zeta^{-1}\in\QQ(B)\QQ_m\cap\QQ_{2^d}\subseteq\QQ_{2^{d-2}m}\cap\QQ_{2^d}=\QQ_{2^{d-2}}.\] 
This forces $d=3$ and $\exp(D)=2$. Then however, $D$ is elementary abelian and $k(B)=8=2^d$ by Kessar--Koshitani--Linckelmann~\cite{KKL}. This contradiction shows that $\exp(D)=2^{d-1}$. Now it is well-known that $|D:D'|=4$ unless $d>3$ and
\[D=\langle x,y:x^{2^{d-1}}=y^2=1,\ yxy^{-1}=x^{1+2^{d-2}}\rangle.\]
In this exception, $B$ is nilpotent by \cite[Theorem~8.1]{habil}. By Broué--Puig~\cite{BrouePuig}, there exists a $2$-rational character $\chi\in\Irr(B)$ such that $\Irr(B)=\{\lambda*\chi:\lambda\in\Irr(D)\}$. This yields the contradiction
\[\QQ(B)\QQ_m=\QQ(D)\QQ_m=\QQ_{2^{d-2}m}.\]
For the last claim recall that $B$ has tame representation type if and only if $D$ is a Klein four-group (detectable by \autoref{exp}) or $D$ is non-abelian and $|D/D'|=4$. 
\end{proof}

We remark that the distinction of the defect groups of order $8$ in the proof above relies implicitly on the classification of finite simple groups (via \cite{KKL}). The dependence on the CFSG can be avoided by making use of the remark after the proof of \autoref{expZ}.
As in \cite{NaSaTi}, the Alperin--McKay Conjecture would imply that $|D/D'|=4$ if and only if $B$ has exactly four irreducible characters of height $0$ (provided $p=2$).

\section{A characterization of nilpotent blocks}

As before, let $B$ be a $p$-block of $G$ with defect group $D$. Let $X(B)$ be the submatrix of $X(G)$ with rows indexed by $\Irr(B)$. By the block orthogonality relation (see \cite[Corollary~5.11]{Navarro}), the matrix $X(B)^\mathrm{t}\overline{X(B)}$ has block diagonal shape. The blocks (of that matrix) are the matrices $X_x^\mathrm{t}\overline{X_x}$ studied in the proof of \autoref{major}. 
Let $(1,B)=(x_1,b_1),\ldots,(x_s,b_s)$ be representatives for the $G$-conjugacy classes of $B$-subsections. 
Let $C_i$ be the Cartan matrix of $b_i$. We have seen in the proof of \autoref{major} that $X(B)^\mathrm{t}\overline{X(B)}$ and the block diagonal matrix $C_1\oplus\ldots\oplus C_s$ have the same \emph{non-zero} elementary divisors $e_1,\ldots,e_k$ over $\mathcal{O}$ (up to multiplication with units in $\mathcal{O}$). Hence, we may assume that $e_1,\ldots,e_k$ are uniquely determined integer $p$-powers. 
We call $e_1,\ldots,e_k$ the \emph{elementary divisors} of $B$.
It turns out that these numbers are the orders of the \emph{lower defect groups} of $B$ (with multiplicities) introduced by Brauer~\cite{BrauerLDG} (see \autoref{blcksp} below). 
We call
\[\boxed{\gamma(B):=\frac{1}{e_1}+\ldots+\frac{1}{e_k}\in\QQ}\]
the \emph{fusion number} of $B$. This definition is inspired by the class equation in finite groups as will become clear in the sequel.

\begin{Con}\label{con}
For every block $B$ of $G$ we have $\gamma(B)\ge 1$ with equality if and only if $B$ is nilpotent.
\end{Con}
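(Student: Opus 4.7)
The natural starting point is the nilpotent case, which should yield $\gamma(B)=1$ directly. Assume $B$ is nilpotent with defect group $D$. By Broué--Puig, every Brauer correspondent $b_i$ of $B$ in $\C_G(x_i)$ is again nilpotent, satisfies $l(b_i)=1$, and has defect group $\C_D(x_i)$; its Cartan matrix is therefore the $1\times 1$ matrix $(|\C_D(x_i)|)$. Since the fusion system of a nilpotent block is trivial, the $G$-conjugacy classes of $B$-subsections are parameterized by the $D$-conjugacy classes of elements of $D$, and the class equation on $D$ then gives
\[\gamma(B)=\sum_{[x]_D}\frac{1}{|\C_D(x)|}=1.\]
This also explains the terminology ``fusion number.''

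For the general lower bound $\gamma(B)\ge 1$, the strategy is to rewrite the sum using the lower defect group interpretation pointed out in the paper: each elementary divisor $e_j$ equals $|R_j|$ for some $p$-subgroup $R_j\le D$, and if $m_B^{(1)}(R)$ denotes the multiplicity of $R$ as a lower defect group, then
\[\gamma(B)=\sum_R\frac{m_B^{(1)}(R)}{|R|}\]
with $R$ running over $G$-conjugacy classes of $p$-subgroups of $D$. Heuristically, in the nilpotent case all the weight sits at the largest possible $R=\C_D(x)$, whereas non-trivial fusion shifts some weight onto strictly smaller $R$, and since $1/|R|$ grows this should produce a strict surplus. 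I would try to make this precise by applying Robinson's formula, which expresses $m_B^{(1)}(R)$ through Brauer characters in Brauer correspondents of $B$ in $\N_G(R)$, and then using Alperin's fusion theorem to reduce the estimate to local contributions coming from essential subgroups and from $\N_G(D)$.

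The equality case should then follow by reverse-engineering: $\gamma(B)=1$ forces $l(b_i)=1$ and $|D_{b_i}|=|\C_D(x_i)|$ for every $B$-subsection $(x_i,b_i)$, and together these conditions imply nilpotency of $B$ — indeed this is essentially the content of Puig's Conjecture, which the present conjecture is designed to refine.

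The main obstacle is clearly the bound $\gamma(B)\ge 1$ itself. Robinson's formula provides local control of each $m_B^{(1)}(R)$, but assembling contributions across all $G$-conjugacy classes of $R$ into a clean inequality is delicate. The cleanest route would be an identity of the form $\gamma(B)=1+\varepsilon(\mathcal{F}_B)$ with $\varepsilon\ge 0$ and $\varepsilon=0$ iff the fusion system $\mathcal{F}_B$ is trivial — essentially an analog of the class equation for fusion systems. I expect that establishing such a positivity statement is no easier than Puig's Conjecture itself.
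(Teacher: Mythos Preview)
The statement you are attempting to prove is labeled as a \emph{Conjecture} in the paper: it is not proved there, and the paper only establishes it in special cases (\autoref{conprop}) and provides reductions (\autoref{reduction}, \autoref{psolv}). So there is no ``paper's own proof'' to compare your proposal against.

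Your treatment of the nilpotent direction is correct and coincides with what the paper does in \autoref{conprop}(i) and in the proof of \autoref{puig}: parametrise the subsections by $D$-conjugacy classes, observe that each Cartan matrix is the $1\times 1$ matrix $(|\C_D(x_i)|)$, and apply the class equation of $D$.

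The remainder of your proposal is, as you yourself concede, not a proof. Two concrete issues are worth flagging.
\begin{enumerate}
\item In your lower-defect-group formula you write $m_B^{(1)}(R)$, but in the paper's (Olsson's) notation this symbol denotes the $1$-local multiplicity, i.e.\ it counts only the elementary divisors of the Cartan matrix of $B$ itself. The correct quantity for $\gamma(B)$ is the total multiplicity $m_B(R)=\sum_x m_B^{(x)}(R)$, which incorporates the Cartan matrices of \emph{all} Brauer correspondents (cf.\ the proof of \autoref{blcksp}).
\item Your reverse-engineering of the equality case does not work as stated. You claim that $\gamma(B)=1$ forces $l(b_i)=1$ for every subsection, but this is not clear: non-trivial fusion can make the set of subsection representatives strictly smaller than the set of $D$-classes, so the ``base'' sum $\sum_i 1/|\C_D(x_i)|$ taken over $B$-subsections may well be $<1$, and the additional elementary divisors coming from some $l(b_i)>1$ could in principle restore the value $1$. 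The paper actually runs the implication in the opposite direction (\autoref{puig}): assuming $l(b_i)=1$ for all $i$, the class equation gives $\gamma(B)\le 1$, and then the conjecture (if true) forces equality and hence nilpotency. Even granting your implication $\gamma(B)=1\Rightarrow l(b_i)=1$, you would still need Puig's Conjecture to conclude nilpotency, which is itself open.
\end{enumerate}
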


In contrast to the character degrees considered in \cite{MNchardeg}, we will see that the fusion number is invariant under categorical equivalences like isotypies. 

To verify that $\gamma(B)\ge1$, it is often enough to consider only the Cartan matrix $C_1$ of $B$. If $C_1$ possesses an entry coprime to $p$, then $1$ is an elementary divisor and $\gamma(B)\ge1$ with equality if and only if $B$ has defect $0$. 
The remaining elementary divisors $e_i>1$ can in principle be computed locally (see \cite[Theorem~4.3]{OlssonLDG}).

Before providing evidence for \autoref{con}, we derive a consequence which strengthens Puig's Conjecture (mentioned in the introduction) and was established for abelian defect groups in \cite{PuigWatanabe}.

\begin{Prop}\label{puig}
\autoref{con} implies that $B$ is nilpotent if and only if $l(b)=1$ for every $B$-subsection $(x,b)$. 
\end{Prop}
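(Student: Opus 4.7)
My plan is to prove both directions, invoking \autoref{con} only for the non-trivial implication.

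The forward implication ($B$ nilpotent $\Rightarrow l(b)=1$ for every $B$-subsection $(x,b)$) is classical and does not require the conjecture: by Broué--Puig every Brauer correspondent of a nilpotent block is itself nilpotent, and nilpotent blocks have a single irreducible Brauer character.

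For the converse, assume $l(b)=1$ for every $B$-subsection $(x,b)$. Let $\mathcal{F}$ be the fusion system of $B$ on $D$. The $G$-conjugacy classes of $B$-subsections are in bijection with the $\mathcal{F}$-conjugacy classes of elements of $D$; I would pick in each class a fully $\mathcal{F}$-centralized representative $x_i\in D$ (i.e., such that $|\C_D(x_i)|$ is maximal within its $\mathcal{F}$-class) and let $b_i$ be the corresponding Brauer correspondent, so that the defect group of $b_i$ is exactly $\C_D(x_i)$. Under the hypothesis $l(b_i)=1$, the Cartan matrix $C_i$ is $1\times 1$ and its unique entry, being the (sole and hence largest) elementary divisor, equals $|\C_D(x_i)|$. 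Hence the elementary divisors of $B$ in the sense defined before \autoref{con} are precisely $|\C_D(x_1)|,\ldots,|\C_D(x_s)|$, and
\[\gamma(B)=\sum_{i=1}^{s}\frac{1}{|\C_D(x_i)|}.\]

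The key step is then a simple counting bound inside $D$. The $D$-conjugacy classes $[x_1]_D,\ldots,[x_s]_D$ lie in pairwise distinct $\mathcal{F}$-classes, hence are pairwise disjoint subsets of $D$, and $|[x_i]_D|=|D|/|\C_D(x_i)|$. Summing gives
\[\sum_{i=1}^{s}\frac{|D|}{|\C_D(x_i)|}=\sum_{i=1}^{s}|[x_i]_D|\leq|D|,\]
i.e. $\gamma(B)\leq 1$. Combined with $\gamma(B)\geq 1$ from \autoref{con} this forces $\gamma(B)=1$, and the equality clause of the same conjecture then yields that $B$ is nilpotent.

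The only non-formal points in this argument, and the ones I would check most carefully, are the identification of the unique Cartan invariant of a block with $l=1$ with the order of its defect group (standard, since the largest elementary divisor of a Cartan matrix equals $|D|$ with multiplicity one), and the fact that for a fully $\mathcal{F}$-centralized $x_i\in D$ the canonical Brauer correspondent in $\C_G(x_i)$ really has defect group $\C_D(x_i)$ (standard Brauer pair theory). Beyond this everything is bookkeeping in $D$. I note also that the counting bound alone only yields that each $\mathcal{F}$-class in $D$ is a single $D$-orbit, which is a strong but not visibly sufficient constraint on $\mathcal{F}$; the equality clause of \autoref{con} is what actually closes the argument and upgrades this to nilpotency of $B$.
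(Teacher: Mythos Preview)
Your argument is correct and matches the paper's proof essentially line for line: the paper cites \cite[Lemma~1.34]{habil} to choose subsection representatives $(x_i,b_i)$ with $x_i\in D$ and $b_i$ of defect group $\C_D(x_i)$ (your ``fully $\mathcal{F}$-centralized'' choice achieves exactly this), then uses the class equation in $D$ to obtain $\gamma(B)\le 1$ and invokes \autoref{con}. The only cosmetic difference is language---you phrase things via the fusion system $\mathcal{F}$, the paper via Brauer pairs---but the substance is identical.
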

\begin{proof}
It is well-known that every nilpotent block fulfills the condition. Suppose conversely that $l(b)=1$ for every $B$-subsection $(x,b)$. Let $D$ be a defect group of $B$ and let $b_D$ be a Brauer correspondent of $B$ in $D\C_G(D)$. By \cite[Lemma~1.34]{habil}, there exist representatives $(x_1,b_1),\ldots,(x_s,b_s)\in(D,b_D)$ for the $G$-conjugacy classes of $B$-subsections such that $b_i$ is uniquely determined by $x_i$ and has defect group $\C_D(x_i)$. Since $l(b_i)=1$, the Cartan matrix of $b_i$ is $(|\C_D(x_i)|)$ for $i=1,\ldots,s$. The elements $x_1,\ldots,x_s$ can be complemented to a set of representatives $x_1,\ldots,x_t$ for the conjugacy classes of $D$. The class equation for $D$ shows that
\[\gamma(B)=\sum_{i=1}^s\frac{1}{|\C_D(x_i)|}\le\sum_{i=1}^t\frac{1}{|\C_D(x_i)|}=1.\]
According to \autoref{con}, $\gamma(B)=1$ and $B$ is nilpotent.
\end{proof}

\begin{Thm}\label{conprop}
\autoref{con} holds in each of the following situations:
\begin{enumerate}[(i)]
\item $B$ is nilpotent.
\item\label{unique} $B$ is the only block of $G$.
\item $B$ has cyclic defect group.
\item $G$ is a symmetric group.
\item $G$ is a simple group of Lie type in defining characteristic.
\item $G$ is a quasisimple group appearing in the ATLAS.
\end{enumerate}
\end{Thm}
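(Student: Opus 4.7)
The plan is to verify \autoref{con} case by case, in each instance pinning down the Cartan matrices of all Brauer correspondents of $B$ explicitly enough to compute or bound $\gamma(B)$, thereby reducing the inequality $\gamma(B)\ge 1$ to a class-equation-type identity. Cases (i) and (ii) are essentially formal. For (i), we run the computation in the proof of \autoref{puig} in reverse: nilpotency of $B$ together with \cite[Lemma~1.34]{habil} provides representatives $(x_1,b_1),\ldots,(x_s,b_s)\in(D,b_D)$ for the $G$-conjugacy classes of $B$-subsections with $b_i$ of defect group $\C_D(x_i)$ and $l(b_i)=1$, so that $C_i=(|\C_D(x_i)|)$ and the class equation for $D$ yields $\gamma(B)=\sum 1/|\C_D(x_i)|=1$. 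For (ii), $X(B)=X(G)$ and the second orthogonality relation make $X(B)^{\mathrm t}\overline{X(B)}=\diag(|\C_G(g)|)$, so the elementary divisors of $B$ over $\mathcal{O}$ are the $p$-parts $|\C_G(g)|_p$ and
\[\gamma(B)=\sum_{g}\frac{1}{|\C_G(g)|_p}=\frac{1}{|G|}\sum_{g\in G}|\C_G(g)|_{p'}\ge \frac{1}{|G|}\sum_{g\in G}1=1,\]
with equality iff $|\C_G(g)|_{p'}=1$ for every $g\in G$, i.e.\ iff $G$ is a $p$-group; this forces $G=D$ and $B$ nilpotent.

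Case (iii) is a direct Brauer-tree computation. With $D=\langle x\rangle$ cyclic of order $p^d$ and inertial quotient $E$ of order $e$, the stabilizer in $E\le\Aut(D)\cong(\ZZ/p^d\ZZ)^\times$ of an element of order $p^i\ge p$ lies in the $p$-subgroup $1+p^i(\ZZ/p^d\ZZ)$; since $E$ is a $p'$-group this stabilizer is trivial, so $E$ acts freely on $D\setminus\{1\}$ and Burnside's lemma yields $s=1+(p^d-1)/e=1+m$ conjugacy classes of subsections. For $y\ne 1$, $b_y$ is then nilpotent with Cartan matrix $(p^d)$, while $C_B$ has elementary divisors $(1^{e-1},p^d)$ by classical Brauer-tree theory, giving
\[\gamma(B)=(e-1)+\tfrac{1}{p^d}+\tfrac{s-1}{p^d}=(e-1)+\tfrac{m+1}{p^d},\]
which equals $1$ exactly when $e=1$ (equivalently, when $B$ is nilpotent) and strictly exceeds $1$ otherwise.

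For (iv)--(vi) we lean on external structural input. Blocks of $S_n$ are labelled by a $p$-core and a weight $w\ge 0$: defect-$0$ blocks ($w=0$) are nilpotent with $\gamma(B)=1$, and for $w\ge 1$ one verifies $\gamma(B)>1$ from the explicit Cartan invariants of $S_n$-blocks (with $w=1$ reducing directly to case (iii)). For (v), every $p$-block of a simple Lie type group in defining characteristic is either of defect $0$ or the principal block of maximal defect, and the Cartan matrices computed by Humphreys, combined with the action of the Weyl group on a unipotent Sylow subgroup, yield $\gamma(B)>1$ in the principal case. Finally, (vi) is a direct verification from the modular ATLAS and the character-table library in GAP.

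The main obstacle is (iv) at high weight and the non-cyclic instances of (v): $\gamma(B)$ depends on the Cartan matrix of \emph{every} Brauer correspondent $b_y$, and these subsidiary invariants are harder to access than $C_B$ itself. The cleanest workaround is to invoke the identification of the $e_i$ with orders of lower defect groups (\autoref{blcksp}), since lower-defect-group multiplicities admit recursive local descriptions that reduce the computation to groups where the centralizer of a $p$-element is well understood.
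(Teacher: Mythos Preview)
Your handling of (i), (ii) and (vi) matches the paper. For (iii) you compute $\gamma(B)=(e-1)+(m+1)/p^d$ explicitly, whereas the paper simply quotes that a non-nilpotent cyclic-defect block has $1$ among the elementary divisors of its own Cartan matrix $C_B$ (see \cite[Theorem~8.6]{habil}), which already forces $\gamma(B)>1$; your more detailed calculation is correct and reaches the same conclusion.

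The gap is in (iv) and (v), and it is exactly the one you flag in your final paragraph. You propose to control $\gamma(B)$ through the Cartan matrices of \emph{all} Brauer correspondents $b_y$, then retreat to a vague appeal to lower defect groups when that becomes intractable. The paper avoids this detour entirely: since every summand of $\gamma(B)$ is positive, to prove $\gamma(B)>1$ it suffices to locate small elementary divisors of $C_B$ \emph{alone}. For (iv), if $p$ is odd or the weight $w$ is even, Olsson~\cite[Corollary~3.13]{OlssonLDGS} shows that $1$ is an elementary divisor of $C_B$; if $p=2$ and $w=2k+1\ge 3$, one extracts from \cite{BessenOlssonSpin} that the multiplicity of $2$ as an elementary divisor of $C_B$ equals the number of partitions of $w$ with exactly one odd part, and the two partitions $(w)$ and $(2k,1)$ already contribute $\tfrac12+\tfrac12$, with further positive terms to spare. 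For (v), apart from finitely many small cases absorbed into (vi), $G$ has only the principal block $B$ and a defect-$0$ Steinberg block; Malle~\cite{Malleregss} provides at least two non-conjugate elements $g,h$ with $|\C_G(g)|_p=|\C_G(h)|_p=1$, and in any block splitting at most one of the corresponding classes can go to the Steinberg block, so the other furnishes an elementary divisor $1$ of $C_B$. Your references to ``Cartan matrices computed by Humphreys'' and ``the action of the Weyl group'' do not constitute an argument---Cartan matrices in defining characteristic are not known in general---and the lower-defect-group workaround you sketch, while not wrong in spirit, is neither needed nor made precise.
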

\begin{proof}\hfill
\begin{enumerate}[(i)]
\item This follows from the proof of \autoref{puig}. For the remaining parts we may assume that $B$ is non-nilpotent.

\item If $B$ is the only block of $G$, then $X(B)=X(G)=X$ and $X^\mathrm{t}\overline{X}=\diag(|\C_G(g_i)|:i=1,\ldots,k)$ where $g_1,\ldots,g_k$ represent the conjugacy classes of $G$. In particular, $e_i=|\C_G(g_i)|_p$ for $i=1,\ldots,k$. The class equation of $G$ reads 
\[\frac{1}{|\C_G(g_1)|}+\ldots+\frac{1}{|\C_G(g_k)|}=1.\]
Hence, $\gamma(B)>1$ unless $G$ is a $p$-group in which case $B$ is nilpotent.

\item It is well-known that a non-nilpotent block with cyclic defect group has elementary divisor $1$ (see \cite[Theorem~8.6]{habil}). This implies $\gamma(B)>1$ as explained before \autoref{puig}.

\item Let $B$ be a $p$-block of weight $w$ of the symmetric group $S_n$. It is well-known that $B$ is nilpotent if and only if $w=0$ or $(p,w)=(2,1)$. Thus, let $w\ge 1$. If $p$ is odd or $w$ is even, then $C_1$ has elementary divisor $1$ by a theorem of Olsson~\cite[Corollary~3.13]{OlssonLDGS}. Now let $p=2$ and $w=2k+1\ge 3$. Then the multiplicity of $2$ as an elementary divisor of $C_1$ is the number of partitions of $w$ with exactly one odd part (this can be extracted from \cite[Theorem~4.5]{BessenOlssonSpin}). Since there are at least two such partitions (namely $(w)$ and $(2k,1)$), it follows that $\gamma(B)>1$.

\item Apart from finitely many exceptions (like $^2F_4(2)'$) which are covered by \eqref{computer} below, we may assume that $G$ has only two blocks: the principal block $B$ and a block of defect $0$ containing the Steinberg character (see \cite[Section~8.5]{Humphreysbook}). Malle~\cite[Corollary~4.2]{Malleregss} has shown that there are at least two non-conjugate elements $g,h\in G$ such that $|\C_G(g)|_p=|\C_G(h)|_p=1$. One of them accounts for an elementary divisor $1$ of the Cartan matrix of $B$. Thus, $\gamma(B)>1$.

\item\label{computer} In order to check the claim by computer, we replace $B$ by the union of its Galois conjugate blocks so that $X(B)^\mathrm{t}\overline{X(B)}$ becomes an integral matrix. The $p$-parts of the elementary divisors of that matrix can be computed efficiently with Frank Lübeck's \texttt{edim} package~\cite{edim} for GAP~\cite{GAP48}. Since Galois conjugate blocks clearly have the same fusion number, we need to divide by the number of Galois conjugate blocks in the end. 
It turns out that for all blocks $B$ of quasisimple groups in the ATLAS, $\gamma(B)>1$ unless all characters have the same degree. In the latter case, $B$ is nilpotent by \cite{OkuyamaTsushima} (in fact, An and Eaton have shown that all nilpotent blocks of quasisimple groups have abelian defect groups). \qedhere
\end{enumerate}
\end{proof}

We have also compared \autoref{con} to the Malle--Navarro Conjecture (mentioned in the introduction) for small groups ($|G|\le 2000$) without finding any differences. 

In the remainder of the paper we offer two reduction theorems. To this end, we review Olsson's work~\cite{OlssonLDG} on lower defect groups which makes use of the algebraically closed field $F:=\mathcal{O}/\J(\mathcal{O})$ of characteristic $p$. We denote the set of blocks of $G$ by $\Bl(G)$ and the set of conjugacy classes by $\Cl(G)$. For $B\in\Bl(G)$ let $\epsilon_B$ be the block idempotent of $B$ as a subalgebra of $FG$. Moreover, $K^+:=\sum_{x\in K}x\subseteq\Z(FG)$ is the class sum of $K\in\Cl(G)$.

\begin{Prop}\label{blcksp}
The set $\Cl(G)$ can be partitioned into a so-called \emph{block splitting} 
\[\Cl(G)=\bigcup_{B\in\Bl(G)}\Cl(B)\]
such that $\{K^+\epsilon_B:K\in\Cl(B)\}$ is a basis of $\Z(B)\subseteq\Z(FG)$ for every $B\in\Bl(G)$. If $x_K\in K\in\Cl(B)$, then the Sylow $p$-subgroups of $\C_G(x_K)$ are called \emph{lower defect groups} of $B$. Their orders are the elementary divisors of $B$, i.\,e. $\{e_1,\ldots,e_k\}=\{|\C_G(x_K)|_p:K\in\Cl(B)\}$ as multisets.
\end{Prop}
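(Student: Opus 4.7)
The plan is to derive the proposition by combining the calculation in the proof of \autoref{major} with the classical theory of lower defect groups due to Brauer~\cite{BrauerLDG} and Olsson~\cite{OlssonLDG}.

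For the existence of a block splitting, I would argue as follows. The block idempotents give $\Z(FG) = \bigoplus_{B \in \Bl(G)} \Z(B)$, and the class sums $\{K^+ : K \in \Cl(G)\}$ form a basis of $\Z(FG)$. Consider the change-of-basis matrix $M$ from $\{K^+\}$ to any direct-sum basis of $\bigoplus_B \Z(B)$; its columns are naturally grouped by blocks, with $\dim_F \Z(B)$ columns per block, and $M$ is square and invertible. The generalised Laplace expansion of $\det M$ along column-groups writes $\det M$ as a signed sum over partitions $\Cl(G) = \bigsqcup_B \Cl(B)$ with $|\Cl(B)| = \dim_F \Z(B)$ of the products $\prod_B \det(M_{\Cl(B), B})$. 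Since $\det M \neq 0$, at least one partition yields a non-vanishing product, and for such a partition $\{K^+\epsilon_B : K \in \Cl(B)\}$ is a basis of $\Z(B)$ for every $B$.

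For the identification of the elementary divisors, I would proceed $p$-section by $p$-section. By the proof of \autoref{major}, for each $p$-element $x$ the matrix $X_x^\mathrm{t}\overline{X_x}$ shares its non-zero elementary divisors with the block-diagonal Cartan matrix $C_x = C_1 \oplus \cdots \oplus C_s$ of the Brauer correspondents of $B$ in $\C_G(x)$. Since $X(B)^\mathrm{t}\overline{X(B)}$ is block-diagonal with the $X_x^\mathrm{t}\overline{X_x}$ as diagonal blocks (as $x$ ranges over representatives of the $p$-element classes), the multiset $\{e_1,\ldots,e_k\}$ equals the multiset union of the non-zero Cartan elementary divisors taken over all Brauer correspondents of $B$ in centralisers of $p$-elements.

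It remains to match this multiset with $\{|\C_G(x_K)|_p : K \in \Cl(B)\}$. Here one invokes Brauer's theorem identifying the multiplicity of $p^d$ among the generalised Cartan elementary divisors with the number of lower defect groups of $B$ of order $p^d$, together with Olsson's reformulation in which these contributions are indexed by the classes in a block splitting via the filtration of $\Z(B)$ by subspaces spanned by class-sum images of prescribed defect. I expect the main obstacle to be verifying that this correspondence is insensitive to the particular block splitting chosen, so that the multiset $\{|\C_G(x_K)|_p : K \in \Cl(B)\}$ is well defined; this is exactly the bookkeeping carried out in~\cite{BrauerLDG, OlssonLDG}, and requires no new ideas beyond careful tracking of the Brauer homomorphism applied to class sums at each $p$-subgroup.
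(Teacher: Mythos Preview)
Your proposal is correct and follows essentially the same route as the paper. For the existence of a block splitting you use the generalised Laplace expansion of the change-of-basis determinant, which is precisely Olsson's original argument in \cite[Proposition~2.2]{OlssonLDG} (the paper cites this and later reproves it via the Leibniz formula in the proof of \autoref{blckZ}); for the identification of elementary divisors you assemble the Cartan matrices of the Brauer correspondents $p$-section by $p$-section exactly as in the proof of \autoref{major}, and then appeal to the Brauer--Olsson bookkeeping to match these with the multiset $\{|\C_G(x_K)|_p\}$, which is what the paper does by combining \cite[Theorems~3.2, 5.4(1), Corollary~7.7 and the Remark on p.~285]{OlssonLDG}.
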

\begin{proof}
The existence of block splittings is proved in \cite[Proposition~2.2]{OlssonLDG} (the proof is revisited in the following lemma). To verify the second claim, we freely use the notation from \cite{OlssonLDG}. In particular, $m_B(P)$ denotes the multiplicity of a $p$-subgroup $P\in\mathcal{P}(G)$ as a lower defect group of $B$. By combining Theorems~3.2, 5.4(1) and Corollary~7.7 of \cite{OlssonLDG}, the multiplicity of $p^n$ in the multiset $\{|\C_G(x_K)|_p:K\in\Cl(B)\}$ is 
\[\sum_{\substack{P\in\mathcal{P}(G)\\|P|=p^n}}m_B(P)=\sum_{\substack{P\in\mathcal{P}(G)\\|P|=p^n}}\sum_{x\in \Pi(G)}m_B^{(x)}(P)=
\sum_{x\in \Pi(G)}\sum_{\substack{b\in\Bl(\C_G(x))\\b^G=B}}\sum_{\substack{Q\in\mathcal{P}(\C_G(x))\\|Q|=p^n}}m_b^{(1)}(Q).\]
Now by \cite[Remark on p.~285]{OlssonLDG}, 
\[\sum_{\substack{Q\in\mathcal{P}(\C_G(x))\\|Q|=p^n}}m_b^{(1)}(Q)\] 
is the multiplicity of $p^n$ as an elementary divisor of the Cartan matrix of $b$. Moreover, every $B$-subsection $(x,b)$ appears (up to $G$-conjugation) just once in the sum. 
\end{proof}

In the language of block splittings our conjecture can be rephrased as
\[\sum_{K\in\Cl(B)}|K|_p\ge |G|_p\]
with equality if and only if $B$ is nilpotent.

\begin{Lem}\label{blckZ}
Let $Z\le\Z(G)$ be of order $p$. Then there exists a block splitting $\Cl(G)=\bigcup_{B\in\Bl(G)}\Cl(B)$ such that $\Cl(B)=\{Kz:K\in\Cl(B)\}$ for all $B\in\Bl(G)$ and $z\in Z$.
\end{Lem}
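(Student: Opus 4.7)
The plan is to adapt Olsson's proof of the existence of a block splitting (\cite[Proposition~2.2]{OlssonLDG}) so that the resulting partition becomes $Z$-invariant, by exploiting the $F[Z]$-module structure on $\Z(FG)$ coming from the inclusion $F[Z]\subseteq\Z(FG)$. The crucial observation is that for every $z\in Z$ and every block $B$, the element $z\epsilon_B\in\Z(B)$ is a unit in the local ring $\Z(B)$: since $z$ is a central $p$-element and $F$ has characteristic $p$, $z$ acts trivially on every irreducible $FG$-module, so $(z-1)\epsilon_B\in\J(\Z(B))$ and $z\epsilon_B=\epsilon_B+(z-1)\epsilon_B$ is invertible. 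Consequently, multiplication by $z$ is an $F$-linear automorphism of each $\Z(B)$, and the $Z$-action $K\mapsto Kz$ on $\Cl(G)$ preserves the linear-independence condition underlying a block splitting.

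The $Z$-action on class sums, $z\cdot K^+=(Kz)^+$, groups the classes into $Z$-orbits of size $1$ or $p$ and decomposes $\Z(FG)$ as an $F[Z]$-module into indecomposables $V(\mathcal{O})$---one per orbit $\mathcal{O}$---with $V(\mathcal{O})\cong F$ if $|\mathcal{O}|=1$ and $V(\mathcal{O})\cong F[Z]$ if $|\mathcal{O}|=p$. Since each $\Z(B)$ is an $F[Z]$-submodule of $\Z(FG)$, Krull--Schmidt forces the $F[Z]$-module isomorphism $\Z(B)\cong F^{a_B}\oplus F[Z]^{b_B}$ with $k(B)=a_B+pb_B$; globally, $\sum_B a_B=a$ and $\sum_B b_B=b$, where $a$ and $b$ denote the numbers of size-$1$ and size-$p$ $Z$-orbits on $\Cl(G)$.

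The equivariant block splitting is built by assigning each $Z$-orbit entirely to one block: for every $B$, select $a_B$ size-$1$ orbits and $b_B$ size-$p$ orbits whose class sums project to an $F$-basis of $\Z(B)$, with the chosen orbits disjoint across blocks and together covering all of $\Cl(G)/Z$, and take $\Cl(B)$ to be the union of the orbits assigned to $B$. This yields a $Z$-invariant subset of size $a_B+pb_B=k(B)$ whose class sums project to a basis of $\Z(B)$, as desired. The principal obstacle is the feasibility of this simultaneous assignment. I would resolve it by a matroid-partition / Azumaya--Krull--Schmidt style argument: for a size-$p$ orbit $\mathcal{O}$, the injection $V(\mathcal{O})\hookrightarrow\bigoplus_B V_B(\mathcal{O})$ (with $V_B(\mathcal{O}):=\epsilon_B V(\mathcal{O})$) forces $V_B(\mathcal{O})\cong F[Z]$ for at least one $B$, since otherwise each $V_B(\mathcal{O})$ would be a proper cyclic quotient $F[Z]/(z-1)^{i_B}$ with $i_B<p$ and their common kernel $(z-1)^{\max_B i_B}V(\mathcal{O})\ne 0$ would obstruct injectivity; size-$1$ orbits are handled analogously at the level of socles. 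A Hall-type exchange argument using the global multiplicity balance $\sum_B a_B=a$ and $\sum_B b_B=b$ then gives compatible choices across all orbits, producing the required $Z$-invariant block splitting.
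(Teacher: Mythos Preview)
Your structural observations are correct and illuminating: the decomposition of $\Z(FG)$ as an $F[Z]$-module into copies of $F$ and $F[Z]$, one per $Z$-orbit on $\Cl(G)$, and the consequence via Krull--Schmidt that $\Z(B)\cong F^{a_B}\oplus F[Z]^{b_B}$ with $\sum_B a_B=a$, $\sum_B b_B=b$. It is also true that for each size-$p$ orbit $\mathcal{O}$ some block $B$ satisfies $\epsilon_BV(\mathcal{O})\cong F[Z]$.

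The gap is precisely where you locate it yourself: the ``Hall-type exchange argument'' is asserted, not proved, and it is not routine. Two separate difficulties are hidden in that phrase. First, even granting for each orbit a block to which it projects isomorphically, one must produce a \emph{global} assignment meeting the prescribed quotas $(a_B,b_B)$; a system-of-distinct-representatives argument would require verifying a Hall condition that you have not established. Second --- and this is the more serious point --- even a matching in which every individual projection $V(\mathcal{O})\to\Z(B)$ is injective does not force $\bigoplus_{\mathcal{O}\in S_B}V(\mathcal{O})\to\Z(B)$ to be an isomorphism: the images could fail to be independent inside $\Z(B)$. The ``independence'' constraint you need is not a matroid on the set of orbits (orbits have different sizes, and $F[Z]$-module independence does not satisfy the exchange axiom), so neither Hall's theorem nor matroid partition applies directly. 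Your plan may be completable --- for instance, the block-triangular shape of the change-of-basis matrix on $\mathrm{soc}\,\Z(FG)$ coming from $(z-1)^{p-1}V(\mathcal{O})\subseteq(z-1)^{p-1}\Z(FG)$ for $|\mathcal{O}|=p$ is a promising ingredient --- but substantial work remains.

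By contrast, the paper avoids this matching problem altogether. It goes back to Olsson's proof and expands $\det A\ne 0$ (where $A$ is the change-of-basis matrix from block bases to class sums) via the Leibniz formula, grouping terms by the induced partition $J=(J_1,\dots,J_n)$ of $\{1,\dots,k\}$. The crucial computation is that a partition $J$ and its $Z$-translate $J'=\pi_z(J)$ contribute \emph{equal} summands: since $z^p=1$ in characteristic $p$, the matrix effecting multiplication by $z$ on each $\Z(B)$ has determinant $1$, and the signs match because $\sgn(\pi_z)=1$ for $p$ odd (and signs are irrelevant for $p=2$). Hence every free $Z$-orbit of partitions contributes $p\cdot(\text{something})=0$ in $F$, and the nonzero value $\det A$ must come from a $Z$-fixed partition. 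That partition is the desired $Z$-invariant block splitting. The feasibility question you defer to a matching argument is thus resolved in one stroke by a characteristic-$p$ fixed-point count on the determinant.
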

\begin{proof}
In order to exploit Olsson's proof of the existence of block splittings, we recall the full details. Instead of the generalized Laplace expansion we make use of the Leibniz formula for determinants.
Let $\Bl(G)=\{B_1,\ldots,B_n\}$ and $\Cl(G)=\{K_1,\ldots,K_k\}$. Let $I_1\cup\ldots\cup I_n=\{1,\ldots,k\}$ be a partition such that $\{b_i:i\in I_j\}$ is an $F$-basis of $\Z(B_j)$. Then $b_1,\ldots,b_k$ is a basis of $\Z(B_1)\oplus\ldots\oplus\Z(B_n)=\Z(FG)$. On the other hand, the class sums $K_1^+,\ldots,K_k^+$ also form a basis of $\Z(FG)$. Hence, there exists an invertible matrix $A=(a_{ij})\in F^{k\times k}$ such that 
\begin{equation}\label{eqbl}
A\begin{pmatrix}
b_1\\
\vdots\\
b_k
\end{pmatrix}=\begin{pmatrix}
K_1^+\\
\vdots\\
K_k^+
\end{pmatrix}.
\end{equation}
Let $J=(J_1,\ldots,J_n)$ be a partition of $\{1,\ldots,k\}$ such that $|J_i|=|I_i|$ for $i=1,\ldots,n$. 
Let $\sigma_J\in S_n$ be the unique permutation which sends $J_i$ to $I_i$ for $i=1,\ldots,n$ and preserves the natural order of those sets. By the Leibniz formula,
\begin{align}
\notag0\ne\det A&=\sum_{\alpha\in S_k}\sgn(\alpha)\prod_{i=1}^ka_{i,\alpha(i)}=\sum_{J}\sum_{\alpha\in S_{I_1}\times\ldots\times S_{I_n}}\sgn(\alpha\sigma_J)\prod_{i=1}^ka_{i,\alpha\sigma_J(i)}\\
\notag&=\sum_{J}\sgn(\sigma_J)\Bigl(\sum_{\alpha_1\in S_{I_1}}\sgn(\alpha_1)\prod_{j\in J_1}a_{j,\alpha_1\sigma_J(j)}\Bigr)\ldots\Bigl(\sum_{\alpha_n\in S_{I_n}}\sgn(\alpha_n)\prod_{j\in J_n}a_{j,\alpha_n\sigma_J(j)}\Bigr)\\
&=\sum_{J}\sgn(\sigma_J)\det(A_{J_1I_1})\ldots\det(A_{J_nI_n})\label{finaleq}
\end{align}
where $A_{I_iJ_i}:=(a_{st}:s\in I_i,t\in J_i)$.
Hence, there exists some partition $J$ such that $\det(A_{J_iI_i})\ne 0$ for $i=1,\ldots,n$. We now multiply \eqref{eqbl} with the block idempotent $\epsilon_s$ of $B_s$ to get
\[A_{J_sI_s}(b_i:i\in I_s)=(K_j^+\epsilon_s:j\in J_s)\]
(notice that $b_j\epsilon_s=0$ if $j\notin I_s$). Hence, the sets $\Cl(B_s):=\{K_j:j\in J_s\}$ form a block splitting of $G$. 

Next we observe that $Z=\langle z\rangle$ acts by multiplication on $\Cl(G)$. In this way, $z$ induces a permutation $\pi_z$ on $\{1,\ldots,k\}$ such that $K_jz=K_{\pi_z(j)}$. Let $J'_i:=\pi_z(J_i)$ for $i=1,\ldots,n$. We claim that $J'$ makes the same contribution to \eqref{finaleq} as $J$. Since $\Z(B_s)$ is an ideal of $\Z(FG)$, we see that $(K_j^+z\epsilon_s:j\in J_s)$ is a basis of $\Z(B_s)z=\Z(B_s)$. Thus, there exists $A_s\in\GL(k(B_s),F)$ such that 
\[A_s(K_j^+\epsilon_s:j\in J_s)=(K_j^+z\epsilon_s:j\in J_s).\] 
Since $z^p=1$, it follows that $A_s^p=1$. In particular $\det(A_s)=1$, since $F$ has characteristic $p$. Let $\tau_s\in S_{J_s'}$ such that elements $\tau_s\pi_z(j)$ with $j\in J_s$ appear in their natural order. Let $P_s$ be the permutation matrix corresponding to $\tau_s$. Then $A_{J_s'I_s}=P_sA_sA_{J_sI_s}$ for $s=1,\ldots,n$. In particular, $\det(A_{J_s'I_s})=\sgn(\tau_s)\det(A_{J_sI_s})$ for $s=1,\ldots,n$. If $p=2$, it is now clear that $J$ and $J'$ make the same contribution to \eqref{finaleq}. Thus, let $p>2$. Then $\pi_z$ has order $p$ and therefore $\sgn(\pi_z)=1$. 
Moreover, $\sigma_{J'}\tau_1\ldots\tau_s\pi_z=\sigma_J$. Consequently,
\[\sgn(\sigma_{J'})\det(A_{J_1'I_1})\ldots\det(A_{J_n'I_n})=\sgn(\sigma_J)\det(A_{J_1I_1})\ldots\det(A_{J_nI_n})\]
as desired. 

If $J'\ne J$, then the orbit of $J$ under $Z$ has length $p$. The corresponding $p$ equal summands of \eqref{finaleq} cancel out. Since we still have $\det A\ne 0$, there must exist a block splitting $J$ such that $J'=J$. The claim follows.
\end{proof}

In the remark after \cite[Propsosition~7.8]{OlssonLDG} Olsson states that there is no relation between a lower defect group of a block and its dominated block modulo a central $p$-subgroup. Nevertheless, we show that there is a relation if one considers all lower defect groups at the same time.

\begin{Prop}\label{reduction}
Let $Z$ be a $p$-subgroup of $\Z(G)$. Let $B$ be a $p$-block of $G$ and let $\overline{B}$ be the unique block of $G/Z$ dominated by $B$. Then $\gamma(B)=\gamma(\overline{B})$ and $B$ is nilpotent if and only if $\overline{B}$ is.
\end{Prop}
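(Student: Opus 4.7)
My plan is to derive $\gamma(B)=\gamma(\overline{B})$ by choosing a $Z$-invariant block splitting of $G$ and matching the two sums from \autoref{blcksp} orbit by orbit. An easy induction on $|Z|$, factoring through a central subgroup of order $p$ at each step, reduces the assertion to the case $|Z|=p$, which I treat below.

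Assuming $|Z|=p$, I apply \autoref{blckZ} to produce a $Z$-invariant block splitting $\Cl(G)=\bigsqcup_B\Cl(B)$. Let $\pi\colon\Cl(G)\to\Cl(G/Z)$ be the natural projection; its fibers are the $Z$-orbits, of sizes $n\in\{1,p\}$. A standard central-extension calculation yields
\[|\C_{G/Z}(\overline{x})|_p \;=\; |\C_G(x)|_p\,/\,n_{[x]_G}\qquad(x\in G),\]
where $n_{[x]_G}$ denotes the $Z$-orbit size of $[x]_G$: the subgroup $\C_G^\ast(x):=\{g\in G:[g,x]\in Z\}$ contains $\C_G(x)$ with index $|Z|/n_{[x]_G}$ via the surjection $g\mapsto[g,x]$ onto the $Z$-stabilizer of $[x]_G$, and $\C_{G/Z}(\overline{x})=\C_G^\ast(x)/Z$; note $n_{[x]_G}$ is a $p$-power.

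The technical crux is to verify that $\Cl(\overline{B}):=\pi(\Cl(B))$ is a valid block splitting of $G/Z$. I would adapt the Leibniz-type determinantal argument from the proof of \autoref{blckZ}, using the trace lift $T\colon\Z(F(G/Z))\to\Z(FG)^Z$ defined by $T(\overline{K}^+)=\sum_{K'\in\pi^{-1}(\overline{K})}K'^+$ to transport the nonvanishing $Z$-invariant partition from $G$ down to $G/Z$. Granting this compatibility, and observing that $K\mapsto 1/|\C_G(x_K)|_p$ is constant on $Z$-orbits, summing by $Z$-orbits in $\Cl(B)$ yields
\[\gamma(\overline{B})=\sum_{\overline{K}\in\Cl(\overline{B})}\frac{1}{|\C_{G/Z}(\overline{x}_{\overline{K}})|_p}=\sum_{\overline{K}\in\Cl(\overline{B})}\frac{n_{\overline{K}}}{|\C_G(x_{\overline{K}})|_p}=\sum_{K\in\Cl(B)}\frac{1}{|\C_G(x_K)|_p}=\gamma(B).\]

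For the nilpotency equivalence I would invoke the classical fact that nilpotency passes through central $p$-quotients: since $Z\le\pcore_p(G)\le D$ for every defect group $D$ of $B$, the block $\overline{B}$ has defect group $D/Z$ and its fusion system is the quotient of that of $B$ by $Z$, preserving the inertial quotients at each Brauer pair that characterize nilpotency. The main obstacle in the plan is the compatibility verification in the third paragraph: although \autoref{blckZ} delivers a $Z$-invariant splitting of $G$, proving that it descends to a valid splitting of $G/Z$ requires rerunning Olsson's Leibniz-style nonvanishing calculation over the quotient, which is delicate because $\pi\colon\Z(FG)\to\Z(F(G/Z))$ is generally not surjective and one must carefully separate the contributions of free versus fixed $Z$-orbits.
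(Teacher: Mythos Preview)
Your outline coincides with the paper's proof: reduce by induction to $|Z|=p$, invoke \autoref{blckZ} to obtain a $Z$-stable block splitting, push it down to $G/Z$, and compare elementary divisors $Z$-orbit by $Z$-orbit. Your centralizer computation $|\C_{G/Z}(\overline x)|_p=|\C_G(x)|_p/n_{[x]_G}$ is exactly the case distinction the paper makes (free orbit: $\overline e_i=e_i/p$ appearing once for $p$ upstairs classes; fixed orbit: $\overline e_i=e_i$), and your fusion-system sketch for the nilpotency equivalence matches the paper's.

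The only substantive difference is the step you single out as the ``main obstacle''. You propose to rerun the Leibniz determinantal argument over $G/Z$; the paper does not. Instead it exploits the fact that, because $Z$ is a central $p$-subgroup, every block of $G/Z$ is dominated by a \emph{unique} block of $G$, so the canonical epimorphism sends $\epsilon_B$ to $\epsilon_{\overline B}$. From this the paper concludes that $\{\overline K^+\epsilon_{\overline B}:K\in\Cl(B)\}$ already spans $\Z(\overline B)$; since the $\Cl(\overline B)$ form a partition of $\Cl(\overline G)$, a one-line global dimension count
\[
|\Cl(\overline G)|=\dim\Z(F\overline G)=\sum_{\overline B}\dim\Z(\overline B)\le\sum_{\overline B}|\Cl(\overline B)|=|\Cl(\overline G)|
\]
upgrades each spanning set to a basis, yielding the desired block splitting of $G/Z$ with no further determinant calculation. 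Your instinct that $\pi\colon\Z(FG)\to\Z(F(G/Z))$ need not be surjective is correct, but the paper sidesteps it by working directly with the elements $\overline K^+\epsilon_{\overline B}$ in $F\overline G$ rather than with images under~$\pi$, and by using the dimension count globally rather than block by block. This is the idea you are missing; once you have it, the ``delicate'' part of your plan evaporates.
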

\begin{proof}
The second claim was proved in \cite[Lemma~2]{WatanabeNil}. A modern proof in terms of fusion systems can be given along the following lines. The fusion system $\mathcal{F}$ of $B$ contains $Z$ in its center, i.\,e. $\mathcal{F}=\C_{\mathcal{F}}(Z)$. One then shows that $\overline{\mathcal{F}}:=\mathcal{F}/Z$ is the fusion system of $\overline{B}$ (see \cite[Definition~5.9]{CravenBook}). Now by \cite[Proposition~5.60]{CravenBook}, there is a one-to-one correspondence between $\mathcal{F}$-essential subgroups and $\overline{\mathcal{F}}$-essential subgroups. Hence, $B$ is nilpotent if and only if $\overline{B}$ is.

To prove the first claim, we may assume that $|Z|=p$ by induction on $|Z|$. It is convenient to prove the claim for all blocks $B\in\Bl(G)$ at the same time.  
By \autoref{blckZ}, there exists a block splitting $\Cl(G)=\bigcup_{B\in\Bl(G)}\Cl(B)$ such that
\begin{equation}\label{KKz}
K\in\Cl(B)\Longleftrightarrow Kz\in\Cl(B)
\end{equation}
for all $K\in\Cl(G)$ and $z\in Z$.

The canonical epimorphism $G\to \overline{G}:=G/Z$ maps $\epsilon_B$ to $\overline{\epsilon_B}=\epsilon_{\overline{B}}$ since $\overline{B}$ is the only block dominated by $B$ (see \cite[p.~198]{Navarro}). Moreover, $\overline{K}\in\Cl(\overline{G})$ for every $K\in\Cl(G)$. Hence, $\{\overline{K}^+\epsilon_{\overline{B}}:K\in\Cl(B)\}$ spans $\Z(\overline{B})$. If $K,L\in\Cl(G)$ induce the same class $\overline{K}=\overline{L}$, then $L=Kz$ for some $z\in Z$. In this case \eqref{KKz} implies that $K\in\Cl(B)\Longleftrightarrow L\in\Cl(B)$.
Thus, after removing duplicates from the set $\Cl(\overline{B}):=\{\overline{K}:K\in\Cl(B)\}$, we obtain a partition
\[\Cl(\overline{G})=\bigcup_{\overline{B}\in\Bl(\overline{G})}\Cl(\overline{B}).\]
Since \[|\Cl(\overline{G})|=\dim\Z(F\overline{G})=\sum_{\overline{B}\in\Bl(\overline{G})}\dim\Z(\overline{B})\le\sum_{\overline{B}\in\Bl(\overline{G})}|\Cl(\overline{B})|=|\Cl(\overline{G})|,\]
the sets $\Cl(\overline{B})$ form a block splitting of $\Cl(\overline{G})$.

Finally, we determine the elementary divisors $e_1,\ldots,e_k$ of $B$. 
By \autoref{blcksp}, we may label $\Cl(B)=\{K_1,\ldots,K_k\}$ such that $e_i|K_i|_p=|G|_p$ for $i=1,\ldots,k$. If $|\overline{K_i}|=|K_i|$, then the $p$ classes $K_iz\in\Cl(B)$ with $z\in Z$ are all distinct. Since $\frac{1}{p}e_i|\overline{K_i}|_p=|\overline{G}|_p$, we have $p$ of the $e_i$, say $e_{i_1}=\ldots=e_{i_p}$ accounting for one elementary divisor $\overline{e}_i:=\frac{1}{p}e_i$ of $\overline{B}$. If, on the other hand, $|K_i|=p|\overline{K_i}|$, then $K_i=K_iz$ for all $z\in Z$. In this case we set $\overline{e}_i:=e_i$. This gives the elementary divisors $\overline{e}_1,\ldots,\overline{e}_l$ of $\overline{B}$
such that
\[\gamma(\overline{B})=\frac{1}{\overline{e}_1}+\ldots+\frac{1}{\overline{e}_l}=\frac{1}{e_1}+\ldots+\frac{1}{e_k}=\gamma(B).\qedhere\]
\end{proof}

Our final result is a reduction for blocks of $p$-solvable groups to a purely group-theoretic assertion (it might be called a \emph{projective} class equation).

\begin{Prop}\label{psolv}
\autoref{con} holds for all $p$-blocks of $p$-solvable groups if and only if the following is true: Let $G$ be a $p$-solvable group such that $Z:=\Z(G)=\pcore_{p'}(G)\le G'$ is cyclic and $Z\ne\pcore^p(G)$. Let $K_1,\ldots,K_n$ be the conjugacy classes of $G/Z$ consisting of elements $xZ$ such that $\C_{G/Z}(xZ)=\C_G(x)/Z$. Then $|K_1|_p+\ldots+|K_n|_p>|G|_p$.
\end{Prop}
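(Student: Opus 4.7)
The plan is to use Fong's reductions for $p$-solvable groups together with \autoref{reduction} to bring any non-nilpotent $p$-block $B$ of a $p$-solvable group into the normal form described in the proposition, and then to express $\gamma(B)$ directly in terms of the $\lambda$-regular class structure of $G/Z$.

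\textbf{Reduction to normal form.} Nilpotent blocks satisfy $\gamma(B)=1$ by \autoref{conprop}(i), so assume $B$ is non-nilpotent. Fong's first and second reduction theorems produce an isotypic block on a group with $\pcore_{p'}$ central; the underlying Morita equivalence identifies $B$-subsections with those of the new block and preserves the Cartan matrices of Brauer correspondents, hence the multiset of elementary divisors and $\gamma$. Applying \autoref{reduction} to the $p$-part of $\Z(G)$ then yields $Z:=\Z(G)=\pcore_{p'}(G)$, a $p'$-group. Let $\lambda\in\Irr(Z)$ be the central character of $B$; since $\ker\lambda$ is a central $p'$-subgroup in the kernel of the central character, passing to $G/\ker\lambda$ preserves $\Cl(B)$ and hence $\gamma(B)$, so we may assume $\lambda$ is faithful and $Z$ cyclic. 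If $Z\not\le G'$, a combination of twisting $B$ by a suitable linear character of $G/G'$ (which preserves all Cartan matrices, and hence $\gamma$) together with a further $\ker$-reduction yields $Z\le G'$. Finally, since $B$ is non-nilpotent, $G/Z$ cannot be a $p$-group (every $p$-block of a central $p'$-extension of a $p$-group is nilpotent), so $Z\ne\pcore^p(G)$.

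\textbf{Identification of $\gamma(B)$.} The primitive idempotent $e_\lambda:=|Z|^{-1}\sum_{z\in Z}\lambda^{-1}(z)z\in FZ$ satisfies $ze_\lambda=\lambda(z)e_\lambda$, and since $\epsilon_B=\epsilon_Be_\lambda$ we obtain $z\epsilon_B=\lambda(z)\epsilon_B$ in $\Z(B)$. Setting $Z_x:=\{z\in Z:xz\text{ is }G\text{-conjugate to }x\}$ and writing the $G$-class $K$ of $x$ as a disjoint union of cosets $y_iZ_x$ (with $y_1Z,\ldots,y_{|\bar K|}Z$ the $Z$-cosets meeting $K$), one computes
\[K^+\epsilon_B=\Bigl(\sum_{z\in Z_x}\lambda(z)\Bigr)\sum_i y_i\epsilon_B.\]
This vanishes unless $\lambda|_{Z_x}$ is trivial; faithfulness of $\lambda$ forces $Z_x=1$, which is equivalent to $\C_{G/Z}(xZ)=\C_G(x)/Z$. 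Hence only $\lambda$-regular classes of $G/Z$ contribute to $\Z(B)$. Since $k(B)$ equals the number $n$ of such classes (a standard fact from projective representation theory) and $\dim\Z(B)=k(B)$, the block splitting picks exactly one $G$-class per $K_i$, each satisfying $|K|=|K_i|$ (because $Z_x=1$), with elementary divisor $|\C_G(x_i)|_p=|G|_p/|K_i|_p$. Summing,
\[\gamma(B)=\sum_{i=1}^n\frac{1}{|\C_G(x_i)|_p}=\frac{1}{|G|_p}\sum_{i=1}^n|K_i|_p,\]
so $\gamma(B)>1$ is exactly the inequality of the proposition. Both directions of the equivalence now follow: in the forward direction, apply the conjecture to such a $B$; in the backward direction, reduce any non-nilpotent $p$-block of a $p$-solvable group to this situation.

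\textbf{Main obstacle.} The principal hurdle is proving rigorously that $\gamma$ is invariant under Fong's reduction. The paper asserts invariance under isotypies (and Fong's correspondence is an isotypy), but a proof must trace the explicit Morita bimodule to confirm that it induces a bijection of $B$-subsections with $\tilde B$-subsections preserving Cartan matrices, so that the multisets of elementary divisors of $X(B)^\mathrm{t}\overline{X(B)}$ and $X(\tilde B)^\mathrm{t}\overline{X(\tilde B)}$ coincide. A secondary technical point is verifying, for the forward direction, that the block with faithful $\lambda$ is non-nilpotent under the proposition's hypotheses — the combination of $Z\le G'$ and $Z\ne\pcore^p(G)$ should force this via the Broué--Puig structure theorem for nilpotent blocks, but a clean argument requires care.
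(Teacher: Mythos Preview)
Your overall strategy matches the paper's: reduce via Fong/Broué to $Z=\Z(G)=\pcore_{p'}(G)$, kill $\Ker(\lambda)$, and then identify the elementary divisors of $B_\lambda$ with the centralizer orders on the $\lambda$-regular classes of $G/Z$. Your computation of $K^+\epsilon_B$ via the central idempotent is essentially the paper's explicit block splitting $\Cl(B_\lambda)=\{K: Z_K\subseteq\Ker\lambda\}$, and your conclusion $\gamma(B)=|G|_p^{-1}\sum|K_i|_p$ is exactly what the paper obtains.

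There is, however, a real gap in your reduction to $Z\le G'$. Twisting by a linear character $\mu$ of $G$ replaces $\lambda$ by $\lambda\cdot\mu|_Z$, and $\mu|_Z$ must be trivial on $Z\cap G'$. When every prime divisor of $|Z|$ already divides $|Z\cap G'|$ (for instance $Z\cong C_4$ with $Z\cap G'\cong C_2$), one checks that $\lambda\cdot\mu|_Z$ remains faithful for every admissible $\mu$, so no further $\ker$-reduction is available and your iteration stalls. The paper circumvents this entirely: it passes to the twisted group algebra $F_\beta[G/Z]$ with $\beta=\lambda\circ\alpha$, observes that $\beta$ lies in the Schur multiplier $\cohom^2(G/Z,F^\times)$, and realizes $F_\beta[G/Z]$ as a faithful block of a \emph{stem} covering $\widetilde{G}$ of $G/Z$, which automatically satisfies $\widetilde{Z}\le\widetilde{G}'$. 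Since the $\beta$-regular classes of $G/Z=\widetilde{G}/\widetilde{Z}$ are the same, $\gamma$ is preserved. You should replace the twisting argument by this Schur-multiplier step.

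Your two flagged obstacles are both addressable. For $\gamma$-invariance under Fong's reduction, the paper simply cites Broué: an isotypy transports generalized decomposition matrices up to basic sets, hence preserves the Cartan matrices of all subsections and thus the multiset of elementary divisors. For the forward direction, the non-nilpotency of $B_\lambda$ follows cleanly without Broué--Puig: since $Z=\pcore_{p'}(G)$ is a central $p'$-group and $\lambda$ is $G$-stable, the Brauer correspondents of $B_\lambda$ have full inertial groups, so the fusion system of $B_\lambda$ on a Sylow $p$-subgroup $P$ is $\mathcal{F}_P(G)$. This is nilpotent iff $G$ (equivalently $G/Z$) has a normal $p$-complement; but $\pcore_{p'}(G/Z)=1$ forces that complement to be trivial, i.e.\ $G/Z$ is a $p$-group, i.e.\ $\pcore^p(G)=Z$, contrary to hypothesis.
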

\begin{proof}
Let $B$ be a $p$-block of a $p$-solvable group $G$. 
By Broué~\cite[Théorème~5.5]{Broueiso}, $B$ is isotypic to a block of a $p$-solvable group $H$ such that $\pcore_{p'}(H)\subseteq\Z(H)$. Since isotypies preserve the generalized decomposition matrices up to basic sets (see \cite[Théorème~4.8]{Broueiso}), also the elementary divisors of $B$ are preserved. Hence, 
we may assume that $Z:=\pcore_{p'}(G)\le\Z(G)$. By \autoref{reduction}, we may further assume that $Z=\Z(G)$. 
Recall that $\Ker(B)\le Z$ by \cite[Theorem~6.10]{Navarro}. Obviously, $B$ is nilpotent if and only if the isomorphic block $\overline{B}$ of $G/\Ker(B)$ is nilpotent. Moreover, $X(\overline{B})$ is obtained from $X(B)$ by removing duplicate columns. It follows that $\gamma(B)=\gamma(\overline{B})$. 
By replacing $G$ with $G/\Ker(B)$, we may assume that $B$ is faithful. By \autoref{conprop}, we may assume that $B$ is non-nilpotent and therefore $Z\ne\pcore^p(G)$. The reduction to $Z\le G'$ will be established at the end of the proof.

In order to construct a block splitting, we need to consider all blocks of $G$. 
By \cite[Theorem~10.20]{Navarro}, the blocks of $G$ can be labeled by $\lambda\in\Irr(Z)$ such that $\Irr(B_\lambda)=\Irr(G|\lambda)$. 
The block idempotent of $B_\lambda$ is just the ordinary character idempotent $\epsilon_\lambda\in\Z(FZ)$ (see \cite[p.~51]{Navarro}). Note that $Z$ acts by multiplication on $\Cl(G)$. Let $Z_K:=\{z\in Z:zK=K\}\le Z$ be the stabilizer of $K\in\Cl(G)$. The classes in the orbit of $K$ can be labeled arbitrarily by $\Irr(Z/Z_K)\le\Irr(Z)$, say $\{Kz:z\in Z\}=\{K_\lambda:\lambda\in\Irr(Z/Z_K)\}$. We define
\[\Cl(B_\lambda):=\{K_\lambda:K\in\Cl(G),Z_K\subseteq \Ker(\lambda)\}.\]
Note that for $K\in\Cl(B_\lambda)$ and $z\in Z$ we have
\[(Kz)^+\epsilon_\lambda=\lambda(z)K^+\epsilon_\lambda\in F\cdot K^+\epsilon_\lambda.\]
On the other hand, if $z\in Z_K\setminus\Ker(\lambda)$, then 
\[K^+\epsilon_\lambda=(Kz)^+\epsilon_\lambda=\lambda(z)K^+\epsilon_\lambda=0.\]
It follows easily that $\Z(B_\lambda)$ is spanned by $\{K^+\epsilon_\lambda:K\in\Cl(B_\lambda)\}$. Since 
\[|\Cl(G)|=\dim \Z(FG)=\sum_{\lambda\in\Irr(Z)}\dim\Z(B_\lambda)\le\sum_{\lambda\in\Irr(Z)}|\Cl(B_\lambda)|=|\Cl(G)|,\]
we conclude that $\Cl(G)=\bigcup_{\lambda\in\Irr(Z)}\Cl(B_\lambda)$ is indeed a block splitting of $G$ (this can also be explained with the notion of \emph{good} conjugacy classes in \cite[Theorem~5.14]{Navarro2}). 

We only need to verify the claim for a faithful block $B=B_\lambda$, i.\,e. $\Ker(\lambda)=1$ and $Z$ is cyclic. Here the conjugacy classes $K\in\Cl(B)$ represent the regular orbits of $Z$ on $\Cl(G)$. Thus, for $x\in K$ we have $\C_G(x)/Z=\C_{G/Z}(xZ)$ as desired. 
Now we fix coset representatives $\widehat{g}$ for every $g\in G/Z$. 
Then the equation $\widehat{g}\widehat{h}=\alpha(g,h)\widehat{gh}$ where $g,h\in G/Z$ defines a 2-cocycle $\alpha\in\Z^2(G/Z,Z)$. Let $\beta:=\lambda\circ\alpha\in\Z^2(G/Z,F^\times)$. It is well-known that the map $g\mapsto\widehat{g}\epsilon_\lambda$ induces an algebra isomorphism between the twisted group algebra $F_\beta[G/Z]$ and $B$. The class sums $K^+\epsilon$ with $K\in\Cl(B)$ correspond to the so-called $\beta$-\emph{regular} class sums of $F_\beta[G/Z]$ (these are the only non-vanishing class sums in $F_\beta[G/Z]$ and therefore form a basis of $\Z(F_\beta[G/Z])$). 

Since $\beta$ can be regarded as an element of the Schur multiplier $\cohom^2(G/Z,F^\times)$, $F_\beta[G/Z]$ is also isomorphic to a faithful block of a covering group $\widetilde{G}$ with cyclic $\widetilde{Z}\le\Z(\widetilde{G})\cap\widetilde{G}'$ such that $\widetilde{G}/\widetilde{Z}\cong G/Z$. 
Again the $\beta$-regular class sums correspond to the regular orbits of $\widetilde{Z}$ on $\Cl(\widetilde{G})$. Moreover, we still have $\widetilde{Z}=\pcore_{p'}(\widetilde{G})\ne\pcore^p(\widetilde{G})$. Hence, we may replace $G$ by $\widetilde{G}$ and $Z$ by $\widetilde{Z}$. Since $B$ is non-nilpotent, it remains to show that 
\[\sum_{K\in\Cl(B)}|K|_p=|G|_p\gamma(B)>|G|_p.\qedhere\]
\end{proof}

A concrete example to \autoref{psolv} is the double cover of $S_3\times S_3$ for $p=3$. Here the fusion number of the unique non-principal block is $10/9$ (this is the smallest number larger than $1$ that we have encountered).

If \autoref{con} can be verified for blocks of $p$-solvable groups, then it also holds for blocks with normal defect groups since such blocks are splendid Morita equivalent to blocks of $p$-solvable groups by Külshammer~\cite{Kuelshammer}. Similarly, \autoref{con} would follow for blocks with abelian defect groups if additionally Broué's Conjecture is true.

\section*{Acknowledgment}
I thank Gabriel Navarro for stimulating discussions on this paper, Christine Bessenrodt for making me aware of \cite{BessenOlssonSpin} and Gunter Malle for providing \cite{Malleregss}. Moreover, I appreciate a very careful reading of an anonymous referee.
The work is supported by the German Research Foundation (\mbox{SA 2864/1-2} and \mbox{SA 2864/3-1}).

{\small 

}
\end{document}